\definecolor{labelkey}{rgb}{0.6,0,0}
\newcommandx{\change}[2][1=]{\todo[#1]{#2}}
\newcommandx{\unsure}[2][1=]{\todo[linecolor=red,backgroundcolor=red!25,bordercolor=red,#1]{#2}}
\newcommandx{\rmk}[2][1=]{\todo[linecolor=blue,backgroundcolor=blue!25,bordercolor=blue,#1]{#2}}
\newcommandx{\info}[2][1=]{\todo[linecolor=OliveGreen,backgroundcolor=OliveGreen!25,bordercolor=OliveGreen,#1]{#2}}
\newcommandx{\improvement}[2][1=]{\todo[linecolor=Plum,backgroundcolor=Plum!25,bordercolor=Plum,#1]{#2}}
\newcommandx{\thiswillnotshow}[2][1=]{\todo[disable,#1]{#2}}
\newtheorem{thm}{Theorem}[section]
\newtheorem{prop}[thm]{Proposition}
\theoremstyle{definition}
\theoremstyle{remark}
\title{On inverse problems for a strongly damped
wave equation on compact manifolds}
\author[1]{Li Li \thanks{lil33@uci.edu}}
\affil[1]{Department of Mathematics, University of California, Irvine, CA 92697, USA}
\author[2]{Yang Zhang \thanks{yangzh26@uw.edu}}
\affil[2]{Department of Mathematics, University of Washington,
Seattle, WA 98105, USA}
\date{}
\begin{document}

\maketitle

\noindent \textbf{ABSTRACT.}\,
We consider a strongly damped
wave equation on compact manifolds, both with and without boundaries, and formulate the corresponding inverse problems.
For closed manifolds, we prove that the metric can be uniquely determined, up to an isometry, from the knowledge of the source-to-solution map.
Similarly, for manifolds with boundaries,
we prove that the metric can be uniquely determined, up to an isometry, from partial knowledge of the Dirichlet-to-Neumann map.
The key point is to retrieve the spectral information of the Laplace-Beltrami operator, from the Laplace transform of the measurements.
Further we show that the metric can be determined up to an isometry, using a single measurement in both scenarios.

\section{Introduction}

Let $(M, g)$ be a smooth connected compact Riemannian manifold, with or without boundary, of dimension $n \geq 2$. We consider the strongly damped wave operator
$$\partial^2_{t}  -\Delta_g - \partial_t \Delta_g$$
where $-\Delta_g$ is the positive Laplace-Beltrami operator on $M$.
In the following, we study inverse problems related to this operator, considering manifolds both with and without boundaries.

First, for a closed Riemannian manifold $(M, g)$, we consider the initial value problem
\begin{equation}\label{stdampclosed}
\left\{
\begin{aligned}
(\partial^2_{t}  -\Delta_g - \partial_t \Delta_g) u&= f,\quad \,\,\, (x, t)\in M\times (0, \infty),\\
u(0)= \partial_t u(0)&= 0,\quad \,\,\,x\in M.\\
\end{aligned}
\right.
\end{equation}
For a given nonempty open subset $W \subset M$, we formally define the source-to-solution map
\begin{equation}\label{stosol}
L_W: f\to u_f|_{W\times (0, \infty)}, \qquad \mathrm{supp} (f)\subset W\times (0, \infty),
\end{equation}
where $u_f$ is the solution to (\ref{stdampclosed}).
We prove that $L_W$
is well-defined for sufficiently smooth $f$, see Proposition \ref{wellpose1}.
Our goal is to determine the metric $g$ up to an isometry from the knowledge of $L_W$. The following theorem is our first main result.

\begin{thm}\label{thm_L}
Suppose $(M, g)$, $(M, \tilde{g})$ are smooth connected closed Riemannian manifolds of dimension $n \geq 2$.
Let $W\subset M$ be a nonempty open subset
and let
$L_W, \tilde{L}_W$ be the  source-to-solution maps corresponding to $g, \tilde{g}$.
Suppose
\begin{equation}\label{stosoleq}
L_W f= \tilde{L}_W f,\qquad
\text{for }f\in C^2_c((0, \infty); L^2(W))\,\,\mathrm{with}\,\, \int_W f(t)= 0.
\end{equation}
Then $(M, g)$ and $(M, \tilde{g})$ are isometric.
\end{thm}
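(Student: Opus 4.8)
The plan is to recover the spectral data of the Laplace--Beltrami operator from the Laplace transform in time of the source-to-solution map, and then to invoke a known reconstruction theorem. Let $0=\lambda_0<\lambda_1\le\lambda_2\le\cdots$ be the eigenvalues of $-\Delta_g$ and $\{\phi_k\}_{k\ge 0}$ an $L^2(M)$-orthonormal basis of eigenfunctions. Expanding $u_f=\sum_k u_k(t)\phi_k$ and $f=\sum_k f_k(t)\phi_k$, the problem (\ref{stdampclosed}) decouples into the scalar equations $u_k''+\lambda_k u_k'+\lambda_k u_k=f_k$ with $u_k(0)=u_k'(0)=0$. The mode $k=0$ satisfies the undamped equation $u_0''=f_0$ and does not decay; this is precisely the obstruction removed by the constraint $\int_W f(t)=0$, which forces $f_0\equiv 0$ and hence $u_0\equiv 0$. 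For $k\ge 1$ the characteristic roots $r_k^{\pm}$ of $r^2+\lambda_k r+\lambda_k=0$ have negative real part (their product is $\lambda_k>0$ and their sum is $-\lambda_k<0$), so $u_k$ decays exponentially and its Laplace transform satisfies $\widehat{u_k}(z)=\widehat{f_k}(z)/(z^2+\lambda_k z+\lambda_k)$ for $\operatorname{Re} z>0$.

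Next I would probe $L_W$ with separated sources $f(x,t)=\chi(t)h(x)$, where $\chi\in C^2_c((0,\infty))$ is not identically zero and $h\in L^2(W)$ satisfies $\int_W h=0$; pairing the output with an arbitrary $h'\in L^2(W)$ and dividing by the entire, nonzero function $\widehat\chi$, one extracts from the data the meromorphic function
\[
F_{h,h'}(z)=\sum_{k\ge 1}\frac{\langle h,\phi_k\rangle\,\langle \phi_k,h'\rangle}{z^2+\lambda_k z+\lambda_k},
\]
which converges off its poles by Cauchy--Schwarz (using $\sum_k\lambda_k^{-1}|\langle h,\phi_k\rangle|^2<\infty$) and is holomorphic on $\mathbb{C}\setminus\{-1\}$ away from the points $r_k^{\pm}$. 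The crucial observation is that $r^2+\lambda r+\lambda=0$ is equivalent to $\lambda=-r^2/(r+1)$, so each pole location determines the eigenvalue producing it; distinct eigenvalues therefore contribute poles at disjoint points, and no cancellation between eigenvalues can occur. By elliptic unique continuation no nontrivial eigenfunction vanishes on $W$, and no eigenfunction with $\lambda_k>0$ is constant on $W$; consequently every positive eigenvalue $\lambda$ is detected by some admissible pair $(h,h')$, its multiplicity $m_\lambda$ equals the rank of the residue form $(h,h')\mapsto\langle \Pi_\lambda h,h'\rangle$ on $L^2_0(W)\times L^2(W)$, and from the principal parts of $F_{h,h'}$ at its poles (a simple residue, or the leading Laurent coefficient at $z=-2$ when $\lambda_k=4$) one recovers the bilinear forms $\langle \Pi_{\lambda_k}\cdot,\cdot\rangle$ restricted to $L^2_0(W)$. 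Finally, Weyl's asymptotics for $\{\lambda_k\}_{k\ge1}$ yields $n$ and $\operatorname{Vol}(M,g)$, hence also $\lambda_0=0$ together with the constant trace $\phi_0|_W$. Altogether this determines the local spectral data $\big(\{\lambda_k\}_{k\ge0},\,\{\phi_k|_W\}_{k\ge0}\big)$ of $(M,g)$, up to an orthogonal transformation within each eigenspace.

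Since $L_Wf=\tilde L_W f$ on the class in (\ref{stosoleq}), the procedure produces the same local spectral data for $(M,g)$ and $(M,\tilde g)$ (after a common admissible gauge). The proof then concludes by the Gel'fand inverse boundary spectral problem in its interior/local form --- the boundary control method of Belishev and Kurylev --- which reconstructs a compact Riemannian manifold, up to isometry, from its eigenvalues together with the traces of eigenfunctions on a nonempty open set. Hence $(M,g)$ and $(M,\tilde g)$ are isometric.

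I expect the reconstruction step to be essentially a citation, so the main obstacle is the spectral-extraction step. One must justify the convergence and meromorphic continuation of $F_{h,h'}$, carry out the pole analysis carefully (handling the finitely many complex conjugate pairs $r_k^{\pm}=-\tfrac{\lambda_k}{2}\pm i\sqrt{\lambda_k-\tfrac{\lambda_k^2}{4}}$ coming from eigenvalues $0<\lambda_k<4$, the double pole at $z=-2$ when $4$ is an eigenvalue, and the accumulation of the branch $r_k^{+}$ at $z=-1$), and --- the most delicate point --- verify that restricting the spatial probes to the mean-zero space $L^2_0(W)=\{h\in L^2(W):\int_W h=0\}$, as forced by the hypothesis, still detects every eigenvalue with the correct multiplicity and recovers enough of each trace $\phi_k|_W$ to feed the reconstruction theorem.
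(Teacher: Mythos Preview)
Your proposal is correct and follows essentially the same strategy as the paper: Laplace-transform $L_W$ applied to time-separated sources to obtain a meromorphic function on $\mathbb{C}\setminus\{-1\}$ whose poles (via the bijection $\lambda\mapsto\lambda^\pm$, equivalently your observation $\lambda=-r^2/(r+1)$) recover the positive eigenvalues and whose residues recover the restricted eigenprojections, then invoke a known interior spectral reconstruction result for the wave equation. The paper packages the data as the operator-valued function $H_W(s)=\sum_{k\ge1}(s^2+\lambda_{j_k}s+\lambda_{j_k})^{-1}P_{W,k}$ rather than your scalar pairings $F_{h,h'}$, and is somewhat less explicit than you about the mean-zero constraint on the probes and the handling of the $\lambda_0=0$ mode, but the argument is the same.
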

Next, for a manifold $M$ with boundary, we consider the initial boundary value problem
\begin{equation}\label{stdampwith}
\left\{
\begin{aligned}
(\partial^2_{t}  -\Delta_g - \partial_t \Delta_g) u&= 0,\quad \,\,\, (x, t)\in M\times (0, \infty),\\
u & =f, \quad \,\,\, (x, t)\in \partial M\times (0, \infty),\\
u(0)= \partial_t u(0)&= 0,\quad \,\,\,x\in M.\\
\end{aligned}
\right.
\end{equation}
For given relatively open $S_{\mathrm{in}}, S_{\text {out}} \subset \partial M$ satisfying $S_{\mathrm{in}} \cap S_{\mathrm{out}}\neq \emptyset$, we formally define the Dirichlet-to-Neumann map
\begin{equation}\label{DN}
\Lambda_S: f\to \partial_\nu u|_{S_{\text {out}}\times (0, \infty)},\qquad
\mathrm{supp}_x f\subset S_{\text {in}},
\end{equation}
where $\nu$ is the outward unit normal vector field along the boundary $\partial M$.
We prove that $\Lambda_S$
is well-defined for sufficiently smooth $f$, see Proposition \ref{wellpose2}.
Our goal is to determine the metric $g$ up to an isometry from the knowledge of $\Lambda_S$. The following theorem is our second main result.

\begin{thm}\label{thm_Lambda}
Suppose $(M, g)$, $(M, \tilde{g})$ are smooth connected compact Riemannian manifolds with boundary of dimension $n \geq 2$. Let $S_{\mathrm{in}}, S_{\text {out}} \subset \partial M$ be relatively open subsets satisfying $S_{\mathrm{in}} \cap S_{\mathrm{out}}\neq \emptyset$
and let
$\Lambda_S, \tilde{\Lambda}_S$ be the Dirichlet-to-Neumann maps corresponding to $g, \tilde{g}$.
Suppose $g= \tilde{g}$ on $\partial M$ and
\begin{equation}\label{DNeq}
\Lambda_S f= \tilde{\Lambda}_S f,\qquad
\text{for }f\in C^3_c((0, \infty); H^\frac{3}{2}(\partial M))\,\,\mathrm{with}\,\,\mathrm{supp}_x f\subset
S_{\text {in}}.
\end{equation}
Then $(M, g)$ and $(M, \tilde{g})$ are isometric.
\end{thm}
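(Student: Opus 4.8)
The plan is to Laplace-transform the strongly damped wave equation in time, turning it into a one–parameter family of elliptic problems $(-\Delta_g-\mu)v=0$, to show that hypothesis \eqref{DNeq} forces the corresponding partial Dirichlet-to-Neumann maps $\Lambda_{g,\mu}$ and $\Lambda_{\tilde g,\mu}$ to agree for all $\mu$ in the common resolvent set, to read off from the poles and residues of $\mu\mapsto\Lambda_{g,\mu}$ the Dirichlet eigenvalues of $-\Delta_g$ together with the boundary traces of the normal derivatives of the eigenfunctions on $\Gamma:=S_{\mathrm{in}}\cap S_{\mathrm{out}}$, and finally to invoke the (partial-data) boundary control method. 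Concretely, write $P=-\Delta_g$ and, for $\mathrm{Re}\,z$ large, $\widehat w(z)=\int_0^\infty e^{-zt}w(t)\,dt$. It suffices to consider boundary data $f(x,t)=\phi(x)\psi(t)$ with $\psi\in C^3_c((0,\infty))$ and $\phi\in H^{3/2}(\partial M)$ supported in $S_{\mathrm{in}}$, which is admissible in \eqref{DNeq}. The a priori estimates behind Proposition \ref{wellpose2} give that $u_f$ grows at most exponentially in $t$, so $\widehat{u_f}(\cdot,z)$ is holomorphic for $\mathrm{Re}\,z$ large; transforming \eqref{stdampwith} and using the zero initial data yields
\[
\big(z^2+(z+1)P\big)\widehat{u_f}=0 \ \text{ in } M,\qquad \widehat{u_f}\,|_{\partial M}=\widehat\psi(z)\,\phi .
\]
Since $z^2+(z+1)P=(z+1)\big(P-\mu(z)\big)$ with $\mu(z):=-z^2/(z+1)$, and since a short computation shows $\mu(z)$ is never a Dirichlet eigenvalue of $-\Delta_g$ when $\mathrm{Re}\,z>0$ (the Dirichlet spectrum is positive, whereas every solution of $z^2+(z+1)\lambda=0$ with $\lambda>0$ has $\mathrm{Re}\,z\le 0$), we obtain $\widehat{u_f}(\cdot,z)=\widehat\psi(z)\,v_z$, where $v_z$ solves $(-\Delta_g-\mu(z))v_z=0$ in $M$, $v_z|_{\partial M}=\phi$. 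Hence, with $\Lambda_{g,\mu}$ the DtN map of $-\Delta_g-\mu$,
\[
\widehat{\Lambda_S f}(\cdot,z)=\widehat\psi(z)\,\big(\Lambda_{g,\mu(z)}\phi\big)\big|_{S_{\mathrm{out}}}.
\]

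Next, applying \eqref{DNeq} to such $f$ with $\widehat\psi\not\equiv0$, injectivity of the Laplace transform gives $(\Lambda_{g,\mu(z)}\phi)|_{S_{\mathrm{out}}}=(\Lambda_{\tilde g,\mu(z)}\phi)|_{S_{\mathrm{out}}}$ for $\mathrm{Re}\,z$ large and all admissible $\phi$, hence (by density) an identity of bounded operators. Since $z\mapsto\mu(z)$ is non-constant holomorphic, the image of a right half-plane contains an open set, while $\mu\mapsto\Lambda_{g,\mu}$ and $\mu\mapsto\Lambda_{\tilde g,\mu}$ are operator-valued meromorphic functions whose poles are exactly the respective Dirichlet eigenvalues. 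Analytic continuation then forces the two meromorphic families of partial DtN maps (data supported on $S_{\mathrm{in}}$, traces taken on $S_{\mathrm{out}}$) to coincide on the common domain of holomorphy; in particular $-\Delta_g$ and $-\Delta_{\tilde g}$ have the same Dirichlet eigenvalues $\{\lambda_k\}$ and the same residue operators at each $\lambda_k$.

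Then, let $\{\phi_k\}$ be an $L^2(M)$-orthonormal basis of Dirichlet eigenfunctions. Expanding $v_z$ in this basis and using Green's identity with $\phi_k|_{\partial M}=0$ shows that the residue of $\Lambda_{g,\mu}$ at $\mu=\lambda_k$ is the finite-rank operator $h\mapsto\sum_{j:\,\lambda_j=\lambda_k}\big(\int_{\partial M}h\,\partial_\nu\phi_j\big)\,\partial_\nu\phi_j$. Restricting inputs and outputs to the nonempty relatively open set $\Gamma=S_{\mathrm{in}}\cap S_{\mathrm{out}}$ yields a positive semidefinite finite-rank operator on $L^2(\Gamma)$ that determines the family $\{\partial_\nu\phi_j|_\Gamma:\lambda_j=\lambda_k\}$ up to an orthogonal change of basis within each eigenspace. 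Because $g=\tilde g$ on $\partial M$, the normal field $\nu$ and the set $\Gamma$ are intrinsic to the common boundary, so the previous step shows that $(M,g)$ and $(M,\tilde g)$ have identical partial boundary spectral data $\{(\lambda_k,\partial_\nu\phi_k|_\Gamma)\}$ on $\Gamma$. By the boundary control method of Belishev and Kurylev in its partial-data form, such boundary spectral data on a nonempty relatively open subset of the boundary determines a connected compact Riemannian manifold with boundary up to isometry; applying this to $(M,g)$ and $(M,\tilde g)$, which already share $(\partial M,g|_{\partial M})$, produces the desired isometry.

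The analytic heart of the argument is the reduction and continuation step: justifying the Laplace-transform manipulations under the stated regularity ($C^3_c$ in $t$, $H^{3/2}$ in $x$), the exponential bound and holomorphy of $\widehat{u_f}$, the unique solvability and meromorphic dependence in $\mu$ of the complex-frequency elliptic problems (so that $\Lambda_{g,\mu}$ is well-defined and the pole/residue structure is available), and the continuation of the operator identity from $\mu(\{\mathrm{Re}\,z>z_0\})$ to the full common resolvent set. The partial-data bookkeeping that extracts boundary spectral data on $\Gamma$ from residues of the $S_{\mathrm{in}}\to S_{\mathrm{out}}$ DtN map also requires care; once it is done, the final step is a direct appeal to the known reconstruction theorem.
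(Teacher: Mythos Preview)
Your argument is correct and follows the same overall strategy as the paper---Laplace transform in time, meromorphic continuation, reading off spectral data from the singular set, and finally invoking the partial-data Borg--Levinson/boundary control theorem---but the packaging differs in a useful way. The paper expands the solution in Dirichlet eigenfunctions, obtains the operator-valued function $H_S(s)=\sum_k (s^2+\lambda_k s+\lambda_k)^{-2}\Phi_k$ and locates the spectral data at the roots $\lambda_k^{\pm}$ of $z^2+\lambda_k z+\lambda_k=0$ in the $s$-plane. You instead make the substitution $\mu(z)=-z^2/(z+1)$, which turns the Laplace-transformed equation literally into $(-\Delta_g-\mu)v=0$, so that the time-domain DtN map is recognized as the elliptic DtN map $\Lambda_{g,\mu}$ composed with $\mu(z)$; the poles are then the Dirichlet eigenvalues themselves and the residues are the standard rank-one sums of normal traces. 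This is a clean and arguably more transparent route: the pole/residue structure of $\mu\mapsto\Lambda_{g,\mu}$ is classical, and the only extra work is checking that $\mu$ maps $\{\mathrm{Re}\,z>0\}$ into the resolvent set and that its image has nonempty interior, both of which you do. A second minor difference is that you symmetrize by restricting both input and output to $\Gamma=S_{\mathrm{in}}\cap S_{\mathrm{out}}$ to obtain a positive semidefinite residue and hence the boundary spectral data on $\Gamma$, whereas the paper keeps the kernel $\Phi_k$ on $S_{\mathrm{in}}\times S_{\mathrm{out}}$ and invokes the Canuto--Kavian lemma to recover the traces on $S_{\mathrm{in}}\cup S_{\mathrm{out}}$; both feed equally well into the final inverse spectral step.
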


Note that in both Theorem 1.1 and Theorem 1.2,
we need infinite measurements on the entire time interval $(0, \infty)$.
Later we prove in Section \ref{sec_single} that it is possible to make a single measurement on a finite time interval to determine the isometric class of the metric, due to the time analyticity of the forward problem.
For more details, see Theorem \ref{thm_singleL} and \ref{thm_singleL}.

\subsection{Connection with earlier literature}
%
%

There is a vast literature on the metric determination problem associated with hyperbolic equations.
The approach mainly relies on the boundary control (BC) method (see \cite{belishev1992reconstruction}) and its variant.
For the study of metric determination on manifolds with boundaries, one can refer to \cite{katchalov2004equivalence,lassas2014inverse,lassas2010inverse}, while investigations on closed manifolds are detailed in \cite{helin2018correlation, krupchyk2008inverse}. Stability results related to this topic can be found in \cite{stefanov1998stability}.

%
%

In addition, several existing mathematical works address the metric determination problem associated with fractional equations.
Determination results related to time-fractional operators can be found in \cite{kian2018global, kian2021uniqueness}; results concerning space-fractional operators are available in \cite{feizmohammadi2021fractional};
and those pertaining to uncoupled space and time-fractional operators are detailed in \cite{helin2020inverse}.
It turns out that these fractional problems can be connected to the determination problem associated with the classical wave equation.

{
Our proof for the inverse problem on a manifold with boundary is based on a Borg-Levinson type inverse spectral result, see \cite{borg1946umkehrung, levinson1949inverse, nachman1988n, isozaki1989some, katchalov1998multidimensional, kachalov2001inverse, krupchyk2010borg, choulli2013stability}.
This idea is used by several authors in the context of other linear equations, for example, see \cite{canuto2001determining, kian2018global}.

%
%

We are motivated by
the Westervelt equation with a strong damping term arising in nonlinear acoustics.
The Westervelt equation is a quasilinear wave equation.
In some cases, the propagation of waves in a thermoviscous medium is described by the Westervelt equation with strongly damping, see \cite{kaltenbacher2018fundamental}.
Inverse problems of recovering a time-dependent potential and nonlinearity in the Euclidean space is recently considered in \cite{li2023westervelt}.
The operator considered in this paper is the linear part of that equation and our result can be regarded as the metric determination from the first-order linearization of measurements for nonlinear problems on compact manifolds.
Inverse problems for Westervelt equations has been considered in \cite{acosta2022nonlinear, eptaminitakis2022weakly}, for a general nonlinearity in \cite{uhlmann2023inverse},
and with various damping effects in
\cite{kaltenbacher2021identification,kaltenbacher2023simultaneous,kaltenbacher2023nonlinearity,zhang2023nonlinear,li2023inverse,fu2023inverse,kaltenbacher2022determining}.
Linear or semilinear strongly damped wave equations has been investigated by many authors, including but not limited to \cite{pata2005strongly, ghidaglia1991longtime, kawashima1992global, kalantarov2009finite,ikehata2013wave}.
There are also several works on inverse problems for other linear or nonlinear models with strongly damping, see \cite{colombo2007inverse, colombo2008identification,kim2023reconstruction}.

In this paper, we focus on the inverse problems of metric determination for a wave equation with a strong damping.
Although the representation formula for the measurement map becomes more complicated,
see Section \ref{subsec_L} and \ref{subsec_Lambda},
it turns out we are still able to retrieve spectral
information from the measurements by taking the Laplace transform.
This enables us to apply inverse spectral theory established in earlier literature to determine the isometric class of the metric.

\subsection{Organization}
The rest of this paper is organized as follows.
In Section \ref{sec_closed}, we work on closed manifolds and prove Theorem \ref{thm_L}. In Section \ref{sec_boundary}, we work on manifolds with boundaries and prove Theorem \ref{thm_Lambda}. In Section \ref{sec_single}, we prove the results for single measurement, which strengthen the main theorems, see Theorem \ref{thm_singleL} and \ref{thm_singleL}.

\subsection{Notations}
Throughout this paper, suppose $(M, g)$ is a smooth connected compact Riemannian manifold, with or without boundary, of dimension $n \geq 2$.
Let $\langle\cdot, \cdot\rangle$ denote the standard $L^2$-distributional pairing and
$H^r(M)$ denote the standard $L^2$-based Sobolev space $W^{r,2}(M)$.
We denote by
\begin{equation}\label{plusminusroot}
\lambda^{\pm}:= \frac{-\lambda\pm\sqrt{\lambda^2- 4\lambda}}{2}
\end{equation}
 the two (complex) roots of the quadratic equation
$z^2+\lambda z+ \lambda= 0.$

\medskip

\noindent \textbf{Acknowledgements.} L.L. and Y.Z. would like to thank Professor Katya Krupchyk and Professor Gunther Uhlmann for helpful discussions.

\section{On closed manifolds}\label{sec_closed}
Throughout this section, suppose $(M, g)$ is a smooth connected closed Riemannian manifold of dimension $n \geq 2$ and $W\subset M$ is nonempty open subset.
Let $0= \lambda_0< \lambda_1\leq \lambda_2<\cdots\to +\infty$ be the eigenvalues of $-\Delta_g$. Then there exists an orthonormal basis of $L^2(M)$ consisting of the eigenfunctions $\varphi_k$ corresponding to $\lambda_k$. Since the first eigenvalue of
$-\Delta_g$ on closed $M$ is always $0$, with the eigenspace consisting of constant functions. We will restrict ourselves to sources and solutions which are $L^2$-orthogonal to $1$, although this is not mandatory.

\subsection{Representation formula for source-to-solution map} \label{subsec_L}
We first show the global well-posedness of (\ref{stdampclosed}) for sufficiently regular $f$ with compact support.

\begin{prop}\label{wellpose1}
For $f\in C^2_c((0, \infty); L^2(M))$ satisfying $\langle f(t), 1\rangle = 0$, there exists a unique solution
$$u\in C^2([0, \infty); H^2(M))\cap L^\infty(0, \infty; H^2(M))$$
to (\ref{stdampclosed}) satisfying $\langle u(t), 1\rangle = 0$.
\end{prop}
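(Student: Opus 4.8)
The plan is to solve the problem by spectral expansion. Expanding $f$ and the sought solution $u$ in the orthonormal eigenbasis $\{\varphi_k\}_{k\geq 1}$ (we may drop $k=0$ since everything is orthogonal to constants), write $f(x,t)=\sum_{k\geq 1} f_k(t)\varphi_k(x)$ and $u(x,t)=\sum_{k\geq 1} u_k(t)\varphi_k(x)$, where $f_k(t)=\langle f(t),\varphi_k\rangle$. Plugging into \eqref{stdampclosed} reduces the PDE to the family of decoupled scalar ODEs
\begin{equation*}
u_k''(t)+\lambda_k u_k'(t)+\lambda_k u_k(t)=f_k(t),\qquad u_k(0)=u_k'(0)=0.
\end{equation*}
For each fixed $k\geq 1$ this is a constant-coefficient linear ODE whose characteristic roots are exactly $\lambda_k^{\pm}$ from \eqref{plusminusroot}; since $\lambda_k>0$, both roots have strictly negative real part (when $0<\lambda_k<4$ they are complex conjugates with real part $-\lambda_k/2$, and when $\lambda_k\geq 4$ they are real and negative). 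Hence by Duhamel's formula $u_k(t)=\int_0^t G_k(t-\tau)f_k(\tau)\,d\tau$ where $G_k$ is the bounded causal Green's function $G_k(t)=(\lambda_k^+-\lambda_k^-)^{-1}(e^{\lambda_k^+ t}-e^{\lambda_k^- t})$ (with the obvious modification if the roots coincide), and one reads off $\|G_k\|_{L^\infty(0,\infty)}\lesssim \lambda_k^{-1}$ and similarly controlled derivatives, uniformly in $k$.

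Next I would assemble these modewise solutions and check the claimed regularity. The key quantitative point is that the decay of $G_k$ like $\lambda_k^{-1}$ buys two derivatives: $\lambda_k |u_k(t)|\lesssim \|f_k\|_{L^1_t}\lesssim \|f_k\|_{L^\infty_t}$, so $\sum_k \lambda_k^2|u_k(t)|^2\lesssim \sum_k \|f_k\|_{L^\infty_t}^2$, which is finite because $f\in C_c((0,\infty);L^2(M))$ has each time-slice in $L^2$ with compactly supported, hence uniformly bounded, time profile. This gives $u(t)\in H^2(M)$ with a bound uniform in $t$, i.e. $u\in L^\infty(0,\infty;H^2(M))$. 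To get $u\in C^2([0,\infty);H^2(M))$ I would differentiate the Duhamel formula in $t$ twice — using $f\in C^2_c$ so that $f_k, f_k', f_k''$ are all continuous and compactly supported in time with $L^2(M)$-summable norms — and use the equation $u_k''=f_k-\lambda_k u_k'-\lambda_k u_k$ to express $\partial_t^2 u$; dominated convergence / Weierstrass $M$-test on the eigenfunction series then yields continuity of $u, \partial_t u, \partial_t^2 u$ as $H^2(M)$-valued maps. The initial conditions $u(0)=\partial_t u(0)=0$ and $\langle u(t),1\rangle=0$ are immediate from $u_k(0)=u_k'(0)=0$ and the absence of the $k=0$ mode.

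For uniqueness I would argue that any solution in the stated class, paired against $\varphi_k$, produces a function $u_k(t)=\langle u(t),\varphi_k\rangle$ that is $C^2$ in $t$ and solves the same scalar IVP above (integration by parts in $x$ is justified since $u(t)\in H^2(M)$ on a closed manifold, so $-\Delta_g$ acts self-adjointly); by uniqueness for linear ODEs each $u_k$ is determined, and since $\{\varphi_k\}$ is a basis the solution is unique. I expect the main obstacle to be purely bookkeeping rather than conceptual: namely verifying the uniform-in-$k$ bounds on $G_k$ and its first two $t$-derivatives across the regime change at $\lambda_k=4$ (where the two roots collide and one must handle the resonant case $G_k(t)=t e^{\lambda_k^- t}$ separately, though this happens for at most finitely many $k$), and carefully tracking that the compact support of $f$ in time makes the $L^1_t$ and $L^\infty_t$ norms of $f_k$ interchangeable so the $H^2$-summability is clean. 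None of this is deep, but it is where the actual work lies; the decay $\lambda_k^{-1}$ of the damped Green's function is exactly what makes the $H^2$ (rather than merely $H^1$) regularity in the statement correct.
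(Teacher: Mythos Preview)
Your approach is essentially the paper's: spectral expansion, solve the scalar ODE via the Green's function $K_k$ (your $G_k$), and show uniform-in-$t$ convergence of $\sum_k \lambda_k^2|u_k^{(j)}(t)|^2$. The paper differentiates the convolution as $u_k^{(j)}=K_k*f_k^{(j)}$ (valid since $f_k\in C^2_c(0,\infty)$) and then applies Cauchy--Schwarz in $\tau$ to get $\lambda_k^2|u_k^{(j)}(t)|^2\lesssim T_0\int_0^{T_0}|f_k^{(j)}(\tau)|^2\,d\tau$, whose sum over $k$ is $T_0\int_0^{T_0}\|f^{(j)}(\tau)\|_{L^2(M)}^2\,d\tau$ by Fubini.

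One step in your write-up is not justified as stated: you claim $\sum_k\|f_k\|_{L^\infty_t}^2<\infty$ ``because $f\in C_c((0,\infty);L^2(M))$ has each time-slice in $L^2$''. That hypothesis only gives $\sup_t\sum_k|f_k(t)|^2<\infty$, which in general does \emph{not} control $\sum_k\sup_t|f_k(t)|^2$ (one can interlace bumps in disjoint time windows to make the latter diverge). The conclusion is nonetheless true here, but it needs the extra $t$-derivative: from $f_k(t)=\int_0^t f_k'(s)\,ds$ and Cauchy--Schwarz one gets $\|f_k\|_{L^\infty_t}^2\le T_0\int_0^{T_0}|f_k'(s)|^2\,ds$, hence $\sum_k\|f_k\|_{L^\infty_t}^2\le T_0\|f'\|_{L^2((0,T_0);L^2(M))}^2<\infty$. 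Alternatively, just replace your $L^1_t\to L^\infty_t$ step by $L^1_t\to L^2_t$ and sum via Fubini, which is exactly the paper's route and avoids the issue entirely. With that fix, your argument goes through.
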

\begin{proof}
We construct the solution in the series form of $\sum^\infty_{k= 1}u_k(t)\varphi_k$.
To determine $u_k$, we solve the ODE
\begin{equation}
\left\{
\begin{aligned}
(\frac{d^2}{dt^2} + \lambda_k\frac{d}{dt}+ \lambda_k) u_k&= f_k,\\
u_k(0)= \partial_t u_k(0)&= 0.\\
\end{aligned}
\right.
\end{equation}
It has the solution
$$u_k = K_k* f_k,$$
where $K_k(t)$ satisfies the Laplace transform
$$\mathcal{L}K_k(s)= \frac{1}{s^2+ \lambda_k s+ \lambda_k}.$$
More precisely,
\begin{itemize}[itemsep=-5pt,topsep=-2pt]
    \item[(i)]
    if $\lambda_k< 4$, then $\lambda_k^{\pm}= \frac{-\lambda_k\pm i \sqrt{4\lambda_k-\lambda_k^2}}{2}$ and we have
    $$K_k(t)= \frac{1}{\sqrt{\lambda_k- \lambda_k^2/4}}e^{-\frac{\lambda_k}{2}t}\sin(\sqrt{\lambda_k- \lambda_k^2/4}t);$$
    \item[(ii)] if $\lambda_k= 4$, then $\lambda_k^{\pm}= -2$ and we have
    $$K_k(t)= te^{-2t};$$
    \item[(iii)] if $\lambda_k > 4$, then $\lambda_k^{\pm}= \frac{-\lambda_k\pm \sqrt{\lambda_k^2- 4\lambda_k}}{2}$ and we have
    $$K_k(t)= \frac{1}{\lambda_k^+ - \lambda_k^-}(e^{\lambda_k^+ t}- e^{\lambda_k^- t}).$$
\end{itemize}
For $f_k(t)= \langle f(t), \varphi_k\rangle\in C^2_c((0, \infty))$, the $j^{th}$ derivatives of $u_k$ satisfies
$$u^{(j)}_k = K_k* f^{(j)}_k,\qquad j= 0, 1, 2.$$
Now suppose $\mathrm{supp}\,f\subset (0, T_0)\times M$ for some $T_0$.
For large $k$ such that $\lambda_k > 4$, by Cauchy-Schwarz inequality we have
\begin{equation}\label{ukest}
|u^{(j)}_k(t)|^2\leq \frac{1}{(\lambda_k^+ - \lambda_k^-)^2}\int^{\min\{t, T_0\}}_0|e^{\lambda_k^+ (t-\tau)}- e^{\lambda_k^- (t-\tau)}|^2\,d\tau\int^{\min\{t, T_0\}}_0|f^{(j)}_k(\tau)|^2\,d\tau
\end{equation}
$$\leq
\frac{4}{\lambda_k^2- 4\lambda_k}T_0\int^{T_0}_0|f^{(j)}_k(\tau)|^2\,d\tau.$$
By the dominated convergence theorem, we have
$$\sum^\infty_{k= N}\lambda_k^2 |u^{(j)}_k(t)|^2\leq
8T_0\int^{T_0}_0\sum^\infty_{k= N}|f^{(j)}_k(\tau)|^2\,d\tau\to 0$$
as $N\to \infty$. The uniform convergence of the series on $[0, \infty)$ ensures that
$\sum^\infty_{k= 1}u_k(t)\varphi_k$
is the solution to (\ref{stdampclosed}).
\end{proof}

Now let $\lambda_{j_k}$ be the distinct eigenvalues and $P_k$ be the projections on the corresponding eigenspaces.
We define $P_{W, k}:= (P_k\circ e)|_W$ where $e$ is the zero extension map from $L^2(W)$ to $L^2(M)$. Note that
$P_{W, k}\in B(L^2(W))$, where $B(L^2(W))$ denotes the space of bounded linear operators on $L^2(M)$.
The unique continuation property of elliptic operators ensures that $P_{W, k}$ is not the zero map.

\begin{prop}\label{LWHW}
The knowledge of $L_Wf$ for all $f\in C^2_c((0, \infty); L^2(W))$ with $\langle f(t), 1\rangle= 0$
determines the operator-valued function
\begin{equation}\label{HWs}
H_W(s):=\sum^\infty_{k=1}\frac{1}{s^2+\lambda_{j_k} s+ \lambda_{j_k} }P_{W, k},\qquad \mathrm{Re}\,s> 0.
\end{equation}
\end{prop}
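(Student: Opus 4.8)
The plan is to reduce the claim to one identity: the time-Laplace transform converts $L_W$ into multiplication by $H_W(s)$. Fix an admissible source $f\in C^2_c((0,\infty);L^2(W))$ with $\langle f(t),1\rangle=0$, and recall from Proposition \ref{wellpose1} that $u_f=\sum_{k\geq 1}(K_k*f_k)\varphi_k$ with $f_k(t)=\langle f(t),\varphi_k\rangle$ and $\mathcal{L}K_k(s)=(s^2+\lambda_k s+\lambda_k)^{-1}$. Since Proposition \ref{wellpose1} gives $u_f\in L^\infty(0,\infty;H^2(M))$, for every $s$ with $\mathrm{Re}\,s>0$ the Bochner integral $\mathcal{L}(L_Wf)(s):=\int_0^\infty e^{-st}\,(L_Wf)(t)\,dt$ converges absolutely in $L^2(W)$; hence $L_W$ determines the $L^2(W)$-valued holomorphic function $s\mapsto\mathcal{L}(L_Wf)(s)$ on $\{\mathrm{Re}\,s>0\}$.

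Next I would evaluate this transform term by term. Taking the Laplace transform of the scalar ODE satisfied by $u_k$ (equivalently, applying the convolution theorem) gives $\mathcal{L}u_k(s)=\widehat{f_k}(s)/(s^2+\lambda_k s+\lambda_k)$, where $\widehat{f_k}(s)=\langle\mathcal{L}f(s),\varphi_k\rangle$ and $\mathcal{L}f(s):=\int_0^\infty e^{-st}f(t)\,dt\in L^2(W)$. By the proof of Proposition \ref{wellpose1} (see \eqref{ukest}), the partial sums $S_N(t):=\sum_{k\leq N}u_k(t)\varphi_k$ converge to $u_f(t)$ in $H^2(M)$ uniformly in $t$ and stay bounded in $H^2(M)$ uniformly in $N$ and $t$; so dominated convergence (with dominating function $\propto e^{-\mathrm{Re}(s)t}$) lets me interchange the sum with the time integral and then restrict to $W$:
\[
\mathcal{L}(L_Wf)(s)=\sum_{k\geq 1}\frac{\widehat{f_k}(s)}{s^2+\lambda_k s+\lambda_k}\,\varphi_k|_W .
\]
Grouping the terms according to the distinct eigenvalues $\lambda_{j_m}$ and using that $\sum_{\lambda_\ell=\lambda_{j_m}}\widehat{f_\ell}(s)\varphi_\ell=P_m\big(e(\mathcal{L}f(s))\big)$ in $L^2(M)$, whose restriction to $W$ is exactly $P_{W,m}(\mathcal{L}f(s))$, I obtain the key identity
\[
\mathcal{L}(L_Wf)(s)=H_W(s)\big(\mathcal{L}f(s)\big),\qquad \mathrm{Re}\,s>0 .
\]
Because $\langle f(t),1\rangle=0$ forces $\langle\mathcal{L}f(s),1\rangle=0$, the argument $\mathcal{L}f(s)$ ranges over the mean-zero subspace of $L^2(W)$, which is exactly where $H_W(s)$ gets probed.

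Finally I would invert this identity with product sources. Fix $s_0$ with $\mathrm{Re}\,s_0>0$ and an arbitrary $\psi\in L^2(W)$ with $\int_W\psi=0$, and pick a scalar $\phi\in C^2_c((0,\infty))$ with $\mathcal{L}\phi(s_0)\neq 0$ — possible since for $\phi$ concentrating near a point $t_1>0$ one has $\mathcal{L}\phi(s_0)\to e^{-s_0 t_1}\neq 0$. Then $f:=\phi\otimes\psi$ is admissible, $\mathcal{L}f(s)=\mathcal{L}\phi(s)\,\psi$, and evaluating the identity at $s_0$ yields
\[
H_W(s_0)\psi=\frac{1}{\mathcal{L}\phi(s_0)}\,\mathcal{L}(L_Wf)(s_0),
\]
which is determined by $L_W$. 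Letting $s_0$ range over $\{\mathrm{Re}\,s>0\}$ and $\psi$ over the mean-zero functions of $L^2(W)$ determines $H_W(s)$ (one also reads off from this that the series in \eqref{HWs} defines a bounded operator on the mean-zero subspace); this is precisely the object the subsequent argument uses to extract the spectral data $\{\lambda_{j_m},P_{W,m}\}$.

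The proposition is essentially a formal computation, and the only step of substance is the interchange of the infinite sum with the Laplace integral in the middle paragraph. I would handle it via the uniform-in-$t$ convergence of $S_N(t)\to u_f(t)$ in $H^2(M)$ and the uniform $H^2$-bound supplied by the estimates in the proof of Proposition \ref{wellpose1}, so that the scalar Laplace transforms $\mathcal{L}u_k(s)$ may be summed under the integral; the remaining manipulations (the convolution theorem for each mode, and the regrouping into eigenprojections) are routine.
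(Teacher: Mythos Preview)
Your proof is correct and follows essentially the same approach as the paper: both take the Laplace transform of the solution, compute it termwise using the series representation from Proposition~\ref{wellpose1}, and then specialize to product sources $a(t)\xi$ to isolate $H_W(s)\xi$ by dividing by the scalar transform $\mathcal{L}a(s)$. The only cosmetic differences are that the paper fixes a single non-negative $a$ with $\mathcal{L}a>0$ on $(0,\infty)$ (and cites an external reference for the componentwise Laplace transform), whereas you first derive the general identity $\mathcal{L}(L_Wf)(s)=H_W(s)\mathcal{L}f(s)$, allow the scalar profile to vary with $s_0$, and justify the interchange of sum and integral directly via the uniform $H^2$ bounds from Proposition~\ref{wellpose1}.
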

\begin{proof}
We fix a nonzero non-negative $a(t)\in C^2_c((0, \infty))$ such that $\mathcal{L}a> 0$ on $(0, \infty)$. We consider a source $f$ of the form
$$f= a(t)\xi,\qquad \text{where }\xi\in L^2(W)\,\,\mathrm{with}\,\,\langle\xi, 1\rangle= 0.$$
Recall we use $u_f$ to denote the solution associated with $f$.
Then for each $s$ with $\mathrm{Re}\,s> 0$,
the function
$e^{-st}u_f(t)$ is $L^2$-valued integrable on $(0, \infty)$.
Then by Proposition 23 in \cite{helin2020inverse}, we
can take the Laplace transform componentwise to obtain
$$\mathcal{L}u_f(s)= \sum^\infty_{k=1} \mathcal{L}(K_k* f_k)(s)\varphi_k= \sum^\infty_{k=1}\frac{1}{s^2+\lambda_{k} s+ \lambda_{k} }\mathcal{L}f_k(s)\varphi_k$$
$$= \sum^\infty_{k=1}\frac{1}{s^2+\lambda_{k} s+ \lambda_{k} }\langle\mathcal{L}f(s), \varphi_k\rangle\varphi_k
=  \mathcal{L}a(s)\sum^\infty_{k=1}\frac{1}{s^2+\lambda_{j_k} s+ \lambda_{j_k} }P_k\xi,$$
based on the formula for solutions to (\ref{stdampclosed}) that we derive above.
Hence, we can conclude that $L_Wf$ determines the restriction of $\frac{1}{\mathcal{L}a(s)}\mathcal{L}u_f(s)$ in $W$, which equals to $H_W\xi$.
\end{proof}

\subsection{Proof of Theorem 1.1}\label{subsec_L2}
For the metric $g$, we consider the spectral data set
\begin{equation}\label{SD}
\mathrm{SD}(g):= \{(\lambda_{j_k}, P_{W, k}): k\in \mathbb{N}\}.
\end{equation}
To prove Theorem 1.1, it suffices to show that
$\mathrm{SD}(g)$ can be determined from the knowledge of the source-to-solution map $L_W$.
Indeed, we know the fact that $g$ can be determined up to an isometry, from the source-to-solution map associated with the classical wave equation, which is completely characterized by
$\mathrm{SD}(g)$.
More precisely, we have the following well-known result.

\begin{prop}[{\cite[Theorem 2]{helin2018correlation}}]\label{classicstosol}
The metric $g$ can be determined up to an isometry from the source-to-solution map
$$L^{\mathrm{hyp}}_W: f\to u_f|_{W\times (0, \infty)},\qquad f\in C^\infty_c((0, \infty)\times W)$$
associated with the hyperbolic equation
\begin{equation}
\left\{
\begin{aligned}
(\partial^2_{t}  -\Delta_g) u&= f,\quad \,\,\, (x, t)\in M\times (0, \infty),\\
u(0)= \partial_t u(0)&= 0,\quad \,\,\,x\in M.\\
\end{aligned}
\right.
\end{equation}
Here $L^{\mathrm{hyp}}_W$ has the representation formula
$$L^{\mathrm{hyp}}_Wf(x, t)= \sum^\infty_{k=0}\int^t_0 K^{\mathrm{hyp}}_{j_k}(t-\tau)P_{W, k}f(x, \tau)\,d\tau,$$
where $j_0= 0$ with
$$K^{\mathrm{hyp}}_0(t)= t, \qquad
K^{\mathrm{hyp}}_k(t)= \frac{\sin(\sqrt{\lambda_k}t)}{\sqrt{\lambda_k}}\quad (k\geq 1).$$
\end{prop}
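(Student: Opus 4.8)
\noindent\emph{Sketch of the argument (boundary control method with an interior observation set).}
The plan is to pass, by three reductions, from $L^{\mathrm{hyp}}_W$ to the Riemannian distance data of $(M,g)$, which in turn determines the manifold up to isometry.

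First I would derive a Blagoveshchenskii-type identity. The representation formula shows that $L^{\mathrm{hyp}}_W$ obeys a reciprocity relation (reversed-time self-adjointness): the kernels $K^{\mathrm{hyp}}_{j_k}$ are even convolution kernels and the projections $P_{W,k}$ are self-adjoint on $L^2(W)$, and this is exactly the structure needed to run the identity. Concretely, for $f,h\in C^\infty_c((0,T)\times W)$ the function $(t,s)\mapsto\langle u_f(t),u_h(s)\rangle_{L^2(M)}$ solves a one-dimensional wave equation in the variables $(t,s)$ whose source is an explicit bilinear expression in $f$, $h$, $L^{\mathrm{hyp}}_Wf$ and $L^{\mathrm{hyp}}_Wh$, and which vanishes for $t+s$ small by finite speed of propagation; solving it shows that $L^{\mathrm{hyp}}_W$ determines all inner products $\langle u_f(T),u_h(T)\rangle_{L^2(M)}$, for every $T>0$.

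Next I would invoke approximate controllability: by Tataru's unique continuation theorem, for each $T>0$ the wave set $\{u_f(T):f\in C^\infty_c((0,T)\times W)\}$ is dense in $L^2(M(W,T))$, where $M(W,T):=\{x\in M:d_g(x,W)<T\}$ is the domain of influence. Combined with the previous step, one then knows, intrinsically, the nested family of subspaces $L^2(M(W,T))$ with their inner products and inclusions, and — since a wave lies in $L^2$ of a subdomain precisely when it is orthogonal to all waves emanating from complementary subregions of $W$ — the orthogonal projections onto $L^2$ of domains of influence of arbitrary open subsets of $W$. In particular one recovers the function $T\mapsto\mathrm{vol}_g(M(W,T))$ and, after localizing sources to small balls $B(z,\rho)\subset W$, the volumes $\mathrm{vol}_g(M(B(z,\rho),r))$ of domains of influence of small balls centred at points $z\in W$.

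Finally, a standard boundary-control slicing and induction-on-scales argument upgrades the volumes of domains of influence of finite unions of such balls into the observation distance functions $R_W(x):=(d_g(x,z))_{z\in W}\in C(W)$, $x\in M$; and the image $\{R_W(x):x\in M\}$, with the topology inherited from $C(W)$ and the distance function on $W$ itself, determines $(M,g)$ up to isometry (Kurylev-type reconstruction from boundary distance data, with the open set $W$ playing the role of the boundary). I expect the genuine difficulty to lie in this last reduction — converting the abstract Hilbert-space-with-projections data of the first two steps into honest geometric data requires the full boundary-control machinery, the delicate points being continuity of $T\mapsto\mathrm{vol}_g(M(W,T))$ and the induction that recovers distances from volumes of domains of influence — whereas the first two steps are by now routine. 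It is precisely because this whole chain is classical that the proposition can be quoted from \cite{helin2018correlation}; the role of the present paper is to reduce the strongly damped problem to this classical one via the Laplace transform (Proposition \ref{LWHW}).
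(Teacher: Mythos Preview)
The paper does not prove this proposition at all: it is quoted verbatim as \cite[Theorem 2]{helin2018correlation} and used as a black box, with the only work in the surrounding text being Proposition \ref{LWHW} and Proposition \ref{PfTh1}, which reduce the strongly damped problem to knowledge of $\mathrm{SD}(g)$ and hence of $L^{\mathrm{hyp}}_W$. So there is nothing in the paper to compare your argument against.

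Your sketch is a faithful outline of the boundary control method that underlies the cited result: the Blagoveshchenskii identity to recover inner products $\langle u_f(T),u_h(T)\rangle_{L^2(M)}$ from $L^{\mathrm{hyp}}_W$, Tataru's unique continuation to get approximate controllability on domains of influence, recovery of volumes of domains of influence of subsets of $W$, and finally the reconstruction of $(M,g)$ from the interior distance data $\{d_g(\cdot,z):z\in W\}$. This is indeed the architecture of \cite{helin2018correlation}, and you correctly flag where the actual work lies (the last reduction). You also correctly identify, in your final sentence, that the present paper's contribution is precisely the Laplace-transform reduction and not this classical chain. So your proposal is accurate as a summary of the cited proof, but strictly speaking you are proving more than the paper does --- the paper simply invokes the reference.
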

Note that for each $z\neq -1$,
one has $|z^2+\lambda_{j_k} z+ \lambda_{j_k}|\to \infty$
as $k\to \infty$.
Then by the analytic continuation,
equation (\ref{HWs}) determines the
$B(L^2(W))$-valued meromorphic function $H_W(z)$ in $\mathbb{C}\setminus\{-1\}$ with poles
$\{\lambda_{j_k}^{\pm}\}$.

A useful observation is that we have different poles for different $\lambda$.
In fact, given  $\lambda > 4$, it follows that $\lambda^- < -2$ is strictly decreasing as $\lambda\to \infty$, while
$\lambda^+\in (-2, -1)$ is strictly increasing as $\lambda\to \infty$.
Hence, for
$\lambda, \tilde{\lambda}> 0$, the condition
$\lambda\neq \tilde{\lambda}$ implies
$\{\lambda^{\pm}\}\cap \{\tilde{\lambda}^{\pm}\}= \emptyset$.
Moreover, note that
$$\lim_{z\to \lambda^+_{j_k}}c_k(z- \lambda^+_{j_k})^{l_k}H_W(z)= P_{W, k},$$
where we set $c_k= 1, l_k= 2$ if $\lambda_{j_k}= 4$ and we set
$c_k= \lambda^+_{j_k}-\lambda^-_{j_k}, l_k= 1$ if $\lambda_{j_k}\neq 4$.
Now based Proposition \ref{LWHW} and Proposition \ref{classicstosol}, we complete the proof of Theorem 1.1 by proving the following proposition.
\begin{prop}\label{PfTh1}
Let ${H}_W(z), \tilde{H}_W(z)$ be the meromorphic functions
associated with $g, \tilde{g}$ respectively. Suppose $H_W (z)= \tilde{H}_W(z)$. Then we have $\mathrm{SD}(g)= \mathrm{SD}(\tilde{g})$.
\end{prop}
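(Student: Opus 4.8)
The plan is to read off the eigenvalues and the projections $P_{W,k}$ directly from the singularities of the meromorphic function $H_W$. The hypothesis $H_W(z)=\tilde H_W(z)$ says that these two $B(L^2(W))$-valued meromorphic functions on $\mathbb{C}\setminus\{-1\}$ are identical; in particular they have the same pole set, the same pole orders, and the same leading Laurent coefficients at every point. So it suffices to show that $\mathrm{SD}(g)$ is already determined by this pole data.

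\emph{Recovering the eigenvalues.} The pole set of $H_W$ in $\mathbb{C}\setminus\{-1\}$ is exactly $\mathcal P:=\bigcup_{k\ge1}\{\lambda_{j_k}^+,\lambda_{j_k}^-\}$: the $k$-th summand $\frac{1}{z^2+\lambda_{j_k}z+\lambda_{j_k}}P_{W,k}$ is singular only at $z=\lambda_{j_k}^{\pm}$, every other summand is holomorphic there because distinct eigenvalues have disjoint root sets (as shown above), and since $P_{W,k}\ne0$ by the unique continuation property, none of these singularities is removable. Moreover a single root determines its eigenvalue: if $p\ne-1$ solves $p^2+\lambda p+\lambda=0$ then $\lambda=-p^2/(p+1)$, so the rational map $\Phi(p):=-\frac{p^2}{p+1}$ satisfies $\Phi(\mathcal P)=\{\lambda_{j_k}:k\ge1\}$ (and, incidentally, this shows again that a pole arising from one eigenvalue can never coincide with one arising from another). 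Hence from $H_W=\tilde H_W$ we get $\mathcal P=\tilde{\mathcal P}$, so $\{\lambda_{j_k}\}=\Phi(\mathcal P)=\Phi(\tilde{\mathcal P})=\{\tilde\lambda_{j_k}\}$ as sets; after relabeling, $\lambda_{j_k}=\tilde\lambda_{j_k}$ for all $k$.

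\emph{Recovering the projections.} For each $k$, the identity $\lim_{z\to\lambda_{j_k}^+}c_k(z-\lambda_{j_k}^+)^{l_k}H_W(z)=P_{W,k}$ established above expresses $P_{W,k}$ through the leading Laurent coefficient of $H_W$ at the pole $\lambda_{j_k}^+$, where $l_k$ and $c_k$ are explicit functions of $\lambda_{j_k}$ alone ($l_k=2$, $c_k=1$ when $\lambda_{j_k}=4$; $l_k=1$, $c_k=\lambda_{j_k}^+-\lambda_{j_k}^-$ otherwise). Since $\lambda_{j_k}=\tilde\lambda_{j_k}$ forces $\lambda_{j_k}^+=\tilde\lambda_{j_k}^+$, $l_k=\tilde l_k$, $c_k=\tilde c_k$, and since $H_W=\tilde H_W$, the two limits agree, giving $P_{W,k}=\tilde P_{W,k}$. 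Therefore $\mathrm{SD}(g)=\{(\lambda_{j_k},P_{W,k})\}=\{(\tilde\lambda_{j_k},\tilde P_{W,k})\}=\mathrm{SD}(\tilde g)$.

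I do not anticipate a genuine difficulty: the analytic continuation of $H_W$, its meromorphy on $\mathbb{C}\setminus\{-1\}$, the disjointness of the root sets $\{\lambda^{\pm}\}$ for distinct $\lambda$, and the residue formula are all in place before the statement, so the proof just assembles them. The only points needing a word of care are that $\lambda_{j_k}^+$ is a genuine (non-removable) pole — which is exactly where $P_{W,k}\ne0$ enters — and the elementary inversion $p\mapsto-p^2/(p+1)$ of the defining quadratic, which simultaneously rules out any coincidence between poles coming from different eigenvalues.
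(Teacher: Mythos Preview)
Your proof is correct and follows essentially the same approach as the paper: both arguments extract $(\lambda_{j_k},P_{W,k})$ from $H_W$ via the residue formula $\lim_{z\to\lambda_{j_k}^+}c_k(z-\lambda_{j_k}^+)^{l_k}H_W(z)=P_{W,k}$, using the disjointness of $\{\lambda^\pm\}$ for distinct $\lambda$ and $P_{W,k}\ne0$. The paper packages things a touch more directly---computing the limit of $\tilde H_W$ at $\lambda_{j_k}^+$ and arguing it is either some $\tilde P_{W,l}$ or $0$, which forces $\lambda_{j_k}\in\{\tilde\lambda_{\tilde j_l}\}$ since $P_{W,k}\ne0$---whereas you first recover the eigenvalue set via the inversion $\Phi(p)=-p^2/(p+1)$ and then the projections; but this is only an organizational difference.
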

\begin{proof}
Based on the assumption, for each $k \in \mathbb{N}$, we have
\begin{equation}\label{HWtildeeq}
P_{W, k}= \lim_{z\to \lambda^+_{j_k}}c_k(z- \lambda^+_{j_k})^{l_k}H_W(z) =\lim_{z\to \lambda^+_{j_k}}c_k(z- \lambda^+_{j_k})^{l_k}\tilde{H}_{W}(z),
\end{equation}
The RHS of this equation
equals to $\tilde{P}_{W, l}$, provided $\lambda_{j_k}= \tilde{\lambda}_{\tilde{j}_l}$ for some $l\in \mathbb{N}$.
Conversely,
the RHS equals to $0$, provided $\lambda_{j_k}\notin
\{\tilde{\lambda}_{\tilde{j}_l}: l\in \mathbb{N}\}$. Hence (\ref{HWtildeeq}) implies
$(\lambda_{j_k}, P_{W, k})\in \mathrm{SD}(\tilde{g})$ for each $k$ and thus
$\mathrm{SD}(g)\subset \mathrm{SD}(\tilde{g})$.
We can symmetrically show that
$\mathrm{SD}(\tilde{g})\subset \mathrm{SD}(g)$.
\end{proof}

\section{On manifolds with boundary}\label{sec_boundary}
Throughout this section,
suppose $(M, g)$ is a smooth connected compact Riemannian manifold with boundary, of dimension $n \geq 2$.
Let $S_{\mathrm{in}}, S_{\text {out}} \subset \partial M$ be relatively open subsets satisfying $S_{\mathrm {in}} \cap S_{\mathrm{out}}\neq \emptyset$.
Let $0< \lambda_1\leq \lambda_2<\cdots\to +\infty$ be the eigenvalues of $-\Delta_g$ with the homogeneous Dirichlet boundary condition.
Then there exists an orthonormal basis of $L^2(M)$ consisting of the eigenfunctions $\varphi_k\in H^1_0(M)$ corresponding to $\lambda_k$.
The elliptic regularity ensures that
$\varphi_k\in C^\infty(\bar{M})$.
For convenience, we write
\begin{equation}\label{phik}
\phi_k:= \partial_\nu \varphi_k|_{\partial M}.
\end{equation}
We remark that most propositions in this section have their counterparts in Section \ref{sec_closed} although the proofs will be slightly more complicated.

\subsection{Representation formula for Dirichlet-to-Neumann map}\label{subsec_Lambda}
We use following inequality to show the convergence of series later.
\begin{prop}[{\cite[Lemma 2.3]{kian2018global}}]\label{f32est}
For $f \in H^{3 / 2}(\partial M)$, we have
$$
\sum_{k=1}^{\infty} \lambda_{k}^{-2}\left|\int_{\partial M} f(x) \phi_{k}(x) d x\right|^{2} \leqslant C\|f\|_{H^{3 / 2}(\partial M)}^{2}.
$$
\end{prop}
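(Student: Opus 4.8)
The plan is to reduce the estimate to Parseval's identity in $L^2(M)$ by introducing an auxiliary elliptic boundary value problem whose Dirichlet data is $f$. Concretely, given $f\in H^{3/2}(\partial M)$, let $w$ be the unique solution of
$$\Delta_g w= 0 \text{ in } M,\qquad w= f \text{ on } \partial M.$$
Since $0$ is not a Dirichlet eigenvalue of $-\Delta_g$ (recall $0<\lambda_1\le\lambda_2\le\cdots$), this problem is well-posed, and elliptic boundary regularity gives $w\in H^2(M)$ with $\|w\|_{L^2(M)}\le\|w\|_{H^2(M)}\le C\|f\|_{H^{3/2}(\partial M)}$.

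Next I would apply Green's second identity to the pair $w,\varphi_k$. Using $-\Delta_g\varphi_k=\lambda_k\varphi_k$, $\varphi_k|_{\partial M}=0$, $\partial_\nu\varphi_k|_{\partial M}=\phi_k$, together with $\Delta_g w=0$ and $w|_{\partial M}=f$, every boundary term involving $\partial_\nu w$ drops out and one obtains
$$\int_{\partial M} f\,\phi_k\,dS= -\lambda_k\int_M w\,\varphi_k\,dV_g= -\lambda_k\langle w,\varphi_k\rangle_{L^2(M)}.$$
The identity can be verified first for smooth $f$ and then extended by density; alternatively, since $\Delta_g w\in L^2(M)$, the generalized Green formula applies directly and the $\partial_\nu w$-term is harmless because it is tested against $\varphi_k|_{\partial M}=0$.

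Dividing by $\lambda_k$, squaring, and summing over $k$, Parseval's identity for the orthonormal basis $\{\varphi_k\}$ of $L^2(M)$ then yields
$$\sum_{k=1}^\infty \lambda_k^{-2}\left|\int_{\partial M} f\,\phi_k\,dS\right|^2= \sum_{k=1}^\infty \left|\langle w,\varphi_k\rangle_{L^2(M)}\right|^2= \|w\|_{L^2(M)}^2\le C^2\|f\|_{H^{3/2}(\partial M)}^2,$$
which is the asserted bound.

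I do not expect a genuine obstacle here: the only technical ingredients are the well-posedness and $H^2$-regularity of the auxiliary Dirichlet problem and the justification of Green's identity at the available regularity, both of which are standard. If a sharper statement were wanted, the Poisson operator $f\mapsto w$ is actually bounded $H^s(\partial M)\to H^{s+1/2}(M)$ for every $s$, so the same argument gives the estimate with $\|f\|_{H^{-1/2}(\partial M)}$ on the right-hand side; the weaker $H^{3/2}$ norm stated is more than enough for the later applications and keeps the trace considerations trivial.
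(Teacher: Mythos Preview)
The paper does not provide its own proof of this proposition; it is simply quoted from \cite[Lemma 2.3]{kian2018global}. Your argument is correct and is in fact the standard one: lift $f$ to its harmonic extension $w\in H^2(M)$, apply Green's identity to the pair $(w,\varphi_k)$ to get $\int_{\partial M} f\,\phi_k\,dS=-\lambda_k\langle w,\varphi_k\rangle_{L^2(M)}$, and conclude by Parseval together with the elliptic estimate $\|w\|_{L^2(M)}\le C\|f\|_{H^{3/2}(\partial M)}$. Your remark that the same computation in fact yields the sharper bound with $\|f\|_{H^{-1/2}(\partial M)}$ on the right is also correct, since the Poisson operator maps $H^{-1/2}(\partial M)$ into $L^2(M)$.
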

Now we prove the global well-posedness of (\ref{stdampwith}) for sufficiently regular $f$ with compact support. We mention that the regularity here is not optimal but this is not our main concern.

\begin{prop}\label{wellpose2}
For $f\in C^3_c((0, \infty); H^\frac{3}{2}(\partial M))$, the initial boundary value problem (\ref{stdampwith}) has a unique solution
$$u\in C^2([0, \infty); L^2(M))\cap L^\infty(0, \infty; L^2(M)).$$
\end{prop}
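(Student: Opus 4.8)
The plan is to mimic the closed-manifold argument of Proposition \ref{wellpose1}, but now the data is a boundary source rather than an interior source, so I first reduce \eqref{stdampwith} to a problem with homogeneous boundary data by subtracting a lift of $f$. Concretely, fix a smooth extension operator $E\colon H^{3/2}(\partial M)\to H^2(M)$ (for instance via solving the Dirichlet problem for $-\Delta_g$, which gives $Ef$ harmonic with $Ef|_{\partial M}=f$ and $\|Ef\|_{H^2(M)}\lesssim\|f\|_{H^{3/2}(\partial M)}$) and set $v:=u-Ef(t)$, where $Ef(t):=E(f(\cdot,t))$. Since $f\in C^3_c((0,\infty);H^{3/2}(\partial M))$ we get $Ef\in C^3_c((0,\infty);H^2(M))$, and $v$ solves an interior problem with zero boundary and zero initial data, with a source in $C^1_c((0,\infty);L^2(M))$ coming from $(\partial_t^2-\Delta_g-\partial_t\Delta_g)Ef$; note $\Delta_g Ef=0$ by harmonicity, so the source is simply $-\partial_t^2 Ef\in C^1_c((0,\infty);H^2(M))$. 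Then $v$ is constructed as a series $\sum_k v_k(t)\varphi_k$ with each $v_k=K_k*g_k$, $g_k(t)=\langle -\partial_t^2 Ef(t),\varphi_k\rangle$, exactly as in Proposition \ref{wellpose1}, and $u=v+Ef$.

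The key step is to prove convergence of the series for $v$ in $C^2([0,\infty);L^2(M))\cap L^\infty(0,\infty;L^2(M))$. For the $L^2$-bound (no $\lambda_k^2$ weight, unlike the closed case, consistent with the weaker regularity claimed), I would use the kernel estimate $|K_k(t)|^2\lesssim \lambda_k^{-1}$ for $\lambda_k$ large (visible from case (iii): $\lambda_k^+-\lambda_k^-=\sqrt{\lambda_k^2-4\lambda_k}\sim\lambda_k$ while $e^{\lambda_k^\pm t}$ are bounded by $1$), so that by Cauchy--Schwarz
$$
|v_k^{(j)}(t)|^2\le \Big(\int_0^{T_0}|K_k(t-\tau)|^2\,d\tau\Big)\Big(\int_0^{T_0}|g_k^{(j)}(\tau)|^2\,d\tau\Big)\lesssim \frac{T_0}{\lambda_k}\int_0^{T_0}|g_k^{(j)}(\tau)|^2\,d\tau,\qquad j=0,1,2.
$$
Now the crucial point: I want $\sum_k |v_k^{(j)}(t)|^2<\infty$ uniformly in $t$, so I need $\sum_k \lambda_k^{-1}|g_k^{(j)}(\tau)|^2<\infty$. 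Here $g_k^{(j)}(\tau)=-\langle \partial_t^{j+2}Ef(\tau),\varphi_k\rangle$, and by Green's formula and harmonicity of $E(\partial_t^{j+2}f(\tau))$ this pairing equals $-\langle \partial_t^{j+2} f(\tau),\phi_k\rangle_{\partial M}$ up to a sign (using $\Delta_g\varphi_k=-\lambda_k\varphi_k$, $\varphi_k|_{\partial M}=0$). Thus $\sum_k\lambda_k^{-1}|g_k^{(j)}(\tau)|^2=\sum_k\lambda_k^{-1}|\langle\partial_t^{j+2}f(\tau),\phi_k\rangle_{\partial M}|^2$, which is even better than what Proposition \ref{f32est} supplies (that lemma gives a $\lambda_k^{-2}$ weight bound by $\|\cdot\|_{H^{3/2}(\partial M)}^2$), so a fortiori the $\lambda_k^{-1}$-weighted sum is finite and controlled by $\|\partial_t^{j+2}f(\tau)\|_{H^{3/2}(\partial M)}^2$ after discarding finitely many small eigenvalues. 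Then the dominated convergence / tail argument of Proposition \ref{wellpose1} gives $\sum_{k\ge N}|v_k^{(j)}(t)|^2\le C T_0\int_0^{T_0}\|\partial_t^{j+2}f(\tau)\|_{H^{3/2}(\partial M)}^2\,d\tau\to0$, establishing uniform convergence on $[0,\infty)$ for $j=0,1,2$; hence $v\in C^2([0,\infty);L^2(M))\cap L^\infty(0,\infty;L^2(M))$, and since $Ef\in C^3_c((0,\infty);H^2(M))\subset C^2([0,\infty);L^2(M))\cap L^\infty(0,\infty;L^2(M))$, so is $u$.

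Uniqueness follows in the standard way: the difference $w$ of two solutions solves \eqref{stdampwith} with $f=0$ and zero initial data, so each Fourier coefficient $w_k=\langle w,\varphi_k\rangle$ (well-defined since $w(t)\in L^2(M)$, and in $H^1_0$ for the energy computation, or argued by approximation/regularization in $t$) satisfies the homogeneous ODE $w_k''+\lambda_k w_k'+\lambda_k w_k=0$ with $w_k(0)=w_k'(0)=0$, forcing $w_k\equiv0$, hence $w\equiv0$; alternatively one runs a direct energy estimate, multiplying by $\partial_t w$ and integrating, the strong damping term $-\partial_t\Delta_g w$ contributing $\|\nabla\partial_t w\|_{L^2}^2\ge0$ with the right sign. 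I expect the main obstacle to be the bookkeeping in the reduction step — ensuring the lift $Ef$ has enough time-regularity and compact support to legitimately differentiate the series twice in $t$, and matching the $H^{3/2}$ boundary regularity exactly to the available spectral estimate in Proposition \ref{f32est} (this is precisely why the hypothesis demands $C^3_c$ in time and $H^{3/2}$ in space: two derivatives land on $u$, one extra is spent passing from $\partial_t^2 Ef$ to controlling $\partial_t^2$ of the solution series, and $H^{3/2}$ is the natural boundary space making the $\phi_k$-moment sums converge).
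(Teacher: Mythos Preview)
Your overall strategy---lift by a harmonic extension, reduce to the interior problem, then quote the closed--manifold machinery---is sound, and it is \emph{different} from what the paper does. The paper never lifts: it expands $u$ itself in the Dirichlet eigenbasis (which is legitimate as an $L^2$ expansion even though $u\notin H^1_0$), derives the ODE
\[
u_k'' + \lambda_k u_k' + \lambda_k u_k \;=\; -\partial_t\langle f,\phi_k\rangle_{\partial M} - \langle f,\phi_k\rangle_{\partial M}
\]
directly from Green's formula, and hence gets a kernel with Laplace transform $-(s+1)/(s^2+\lambda_k s+\lambda_k)$. The resulting estimate already carries a $\lambda_k^{-2}$ weight, matching Proposition~\ref{f32est} exactly. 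Your route has the virtue of recycling Proposition~\ref{wellpose1} verbatim; the paper's route is shorter and produces the representation formula that the rest of Section~\ref{sec_boundary} actually uses.

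That said, two of your computations are wrong, and they happen to cancel. First, the Green identity does not give $\langle E\psi,\varphi_k\rangle = \pm\langle\psi,\phi_k\rangle_{\partial M}$; it gives
\[
-\lambda_k\langle E\psi,\varphi_k\rangle \;=\; \langle E\psi,\Delta_g\varphi_k\rangle \;=\; \int_{\partial M}\psi\,\phi_k,
\]
so $g_k = \lambda_k^{-1}\langle\partial_t^2 f,\phi_k\rangle_{\partial M}$, with an extra $\lambda_k^{-1}$. Second, your ``a fortiori'' step is backwards: knowing $\sum_k\lambda_k^{-2}|a_k|^2<\infty$ from Proposition~\ref{f32est} does \emph{not} imply $\sum_k\lambda_k^{-1}|a_k|^2<\infty$, since $\lambda_k\to\infty$. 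With the correct $\lambda_k^{-1}$ in $g_k$ you actually need $\sum_k\lambda_k^{-1}|g_k|^2 = \sum_k\lambda_k^{-3}|\langle\partial_t^2 f,\phi_k\rangle|^2$, which \emph{is} dominated by the $\lambda_k^{-2}$ sum, so the argument closes---but not for the reason you wrote. Finally, for $j=2$ you write $v_k^{(2)}=K_k*g_k^{(2)}$, which would require $f\in C^4_c$; with only $C^3_c$ you must split the derivatives as $v_k''=K_k'*g_k'$ and check $\int_0^\infty|K_k'|^2\lesssim\lambda_k^{-1}$ separately (it holds, since $|\lambda_k^+|\sim 1$ and $\int e^{2\lambda_k^- s}ds\sim\lambda_k^{-1}$). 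Fix these three points and the proof goes through.
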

\begin{proof}
We construct the solution of the series form $\sum^\infty_{k= 1}u_k(t)\varphi_k$.
In this case, to determine $u_k$, we let the LHS of (\ref{stdampwith}) act on $\varphi_k$ to obtain
$$\langle (\partial^2_{t}-\Delta_g - \partial_t\Delta_g) u, \varphi_k \rangle= 0.$$
Integrating by parts (twice) and using the boundary condition, we obtain the formula
$$\langle -\Delta_g u, \varphi_k\rangle = \lambda_k
\langle u, \varphi_k\rangle+ \langle f, \phi_k\rangle_{\partial M}.$$
Then we can solve the ODE
\begin{equation}
\left\{
\begin{aligned}
(\frac{d^2}{dt^2} + \lambda_k\frac{d}{dt}+ \lambda_k) u_k&= -\frac{d}{dt}\langle f, \phi_k\rangle_{\partial M}- \langle f, \phi_k\rangle_{\partial M},\\
u_k(0)= \partial_t u_k(0)&= 0\\
\end{aligned}
\right.
\end{equation}
to obtain
$$u_k(t)= \int^t_0 K_k(t-\tau)(\int_{\partial M}f(\tau, x)\phi_k(x)\,dx)d\tau,$$
where $K_k$(t) satisfies the Laplace transform
$$\mathcal{L}K_k= -\frac{s+1}{s^2+ \lambda_k s+ \lambda_k}.$$
More precisely,
\begin{itemize}[itemsep=-5pt,topsep=-2pt]
\item[(i)] if $\lambda_k< 4$, then $\lambda_k^{\pm}= i\frac{-\lambda_k\pm \sqrt{4\lambda_k-\lambda_k^2}}{2}$ and we have
$$K_k(t)= -e^{-\frac{\lambda_k}{2}t}\cos(\sqrt{\lambda_k- \lambda_k^2/4}t)+\frac{1-\lambda_k/2}{\sqrt{\lambda_k- \lambda_k^2/4}}e^{-\frac{\lambda_k}{2}t}\sin(\sqrt{\lambda_k- \lambda_k^2/4}t);$$
\item[(ii)] if $\lambda_k= 4$, then $\lambda_k^{\pm}= -2$ and we have
$$K_k(t)= -e^{-2t}+ te^{-2t};$$
\item[(iii)] if $\lambda_k > 4$, then $\lambda_k^{\pm}= \frac{-\lambda_k\pm \sqrt{\lambda_k^2- 4\lambda_k}}{2}$ and we have
$$K_k(t)= \frac{-1-\lambda_k^+}{\lambda_k^+ - \lambda_k^-}e^{\lambda_k^+ t}
+  \frac{1+\lambda_k^-}{\lambda_k^+ - \lambda_k^-}e^{\lambda_k^- t}.$$
\end{itemize}
Now we verify the uniform convergence of the series
$\sum^\infty_{k= 1}u^{(j)}_k(t)\varphi_k$, for $j=0, 1,2$.
Suppose $\mathrm{supp}\,f\subset (0, T_0)\times M$ for some $T_0$. In fact, for large $k$ such that $\lambda_k > 4$, we can integrate by parts to obtain
$$|u^{(j)}_k(t)|\leq \frac{1}{\lambda_k^+ - \lambda_k^-}[\int^t_0(e^{\lambda_k^+ (t-\tau)}+ e^{\lambda_k^- (t-\tau)})|\int_{\partial M}\partial^{(j)}_tf(\tau, x)\phi_k(x)\,dx|d\tau $$
$$+\int^t_0(e^{\lambda_k^+ (t-\tau)}+ e^{\lambda_k^- (t-\tau)})|\int_{\partial M}\partial^{(j+1)}_tf(\tau, x)\phi_k(x)\,dx|d\tau
+ 2|\int_{\partial M}\partial^{(j)}_tf(t, x)\phi_k(x)\,dx|]$$
$$\leq \frac{1}{\lambda_k^+ - \lambda_k^-}
\int^t_0(e^{\lambda_k^+ (t-\tau)}+ e^{\lambda_k^- (t-\tau)}+ 2)(|\int_{\partial M}\partial^{(j)}_tf(\tau, x)\phi_k(x)\,dx|+
|\int_{\partial M}
\partial^{(j+1)}_tf(\tau, x)\phi_k(x)\,dx|)d\tau.$$
Then by Cauchy-Schwarz inequality, we have
$$|u^{(j)}_k(t)|^2\leq C\lambda^{-2}_k
\min\{t, T_0\}\int^{\min\{t, T_0\}}_0(|\int_{\partial M}\partial^{(j)}_tf(\tau, x)\phi_k(x)\,dx|+
|\int_{\partial M}
\partial^{(j+1)}_tf(\tau, x)\phi_k(x)\,dx|)^2 d\tau.$$
Now Proposition \ref{f32est} and the dominated convergence ensure the convergence of the series.

\end{proof}

Now let $\lambda_{j_k}$ be the distinct eigenvalues, $m_k$ be the corresponding multiplicities, and $\varphi_{j_k, p}, 1\leq p\leq m_k$ be the corresponding eigenfunctions. We define
\begin{equation}\label{Phikxy}
\Phi_k(x, y):= \sum^{m_k}_{p=1}\phi_{j_k, p}(x)\phi_{j_k, p}(y),\qquad (x,y)\in S_{\mathrm{in}}\times S_{\mathrm{out}}.
\end{equation}
Note that \cite[Lemma 2.2]{canuto2001determining} ensures that $\Phi_k$
can not be identically zero on any relatively open subset of $S_{\mathrm{out}}\times S_{\mathrm{in}}$.
As in Subsection \ref{subsec_L}, we fix a nonzero non-negative $a(t)\in C^3_c((0, \infty))$ such that $\mathcal{L}a> 0$ on $(0, \infty)$.
We consider
$$f= a(t)\xi,\qquad \text{where }\xi\in H^{3/2}(\partial M)\,\,\mathrm{with}\,\,\mathrm{supp}\,\xi\subset S_{\mathrm{in}}.$$
Recall we use $u_f$ to denote the solution associated with $f$.
As before, for $s$ with $\mathrm{Re}\,s> 0$, we can
take the Laplace transform to obtain
$$(\mathcal{L}u_f)(s)= -(\mathcal{L}a)(s)\sum^\infty_{k=1}\frac{s+1}{s^2+\lambda_{k} s+ \lambda_{k} }\int_{\partial M}\xi(x)\phi_k(x)\,dx\,\varphi_k,
$$
$$\frac{1}{s^2+2s}(\mathcal{L}u_f)'(s)= (\mathcal{L}a)(s)\sum^\infty_{k=1}\frac{1}{(s^2+\lambda_{k} s+ \lambda_{k})^2}\int_{\partial M}\xi(x)\phi_k(x)\,dx\,\varphi_k
$$
based on the formula for the solution of (\ref{stdampwith}) that we derive above.
Hence,
by considering $\mathcal{L}(\Lambda_S f)$,
we have the following proposition.
\begin{prop}\label{DNHS}
The knowledge of $\Lambda_S f$ for all $f\in C^3_c((0, \infty); H^\frac{3}{2}(\partial M))$ with $\mathrm{supp}_x f\subset
S_{\text {in}}$
determines
\begin{equation}\label{HSs}
H_S(s):= \sum^\infty_{k=1}\frac{1}{(s^2+\lambda_{k} s+ \lambda_{k})^2}\int_{\partial M}\xi(y)\Phi_k(x, y)\,dy,\qquad \mathrm{Re}\,s> 0,\,\,
x\in S_{\text {out}}
\end{equation}
for each $\xi\in H^{3/2}(\partial M)$ with $\mathrm{supp}\,\xi\subset S_{\mathrm{in}}$.
\end{prop}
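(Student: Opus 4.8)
The plan is to follow the scheme of the proof of Proposition~\ref{LWHW}: apply the Laplace transform in $t$ to the measured normal derivatives, identify the resulting Dirichlet series in the spectral parameter $s$, and isolate from it the operator in \eqref{HSs}. The only new feature compared with the closed case is that the Dirichlet-to-Neumann map produces the kernel $\mathcal{L}K_k(s)=-\frac{s+1}{s^2+\lambda_k s+\lambda_k}$, whereas \eqref{HSs} involves $\frac{1}{(s^2+\lambda_k s+\lambda_k)^2}$; I would bridge the two by a single differentiation in $s$.

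Concretely, fix $\xi\in H^{3/2}(\partial M)$ with $\mathrm{supp}\,\xi\subset S_{\mathrm{in}}$ and take the admissible source $f=a(t)\xi$ from \eqref{DNeq}. Taking the outward normal derivative along $\partial M$ of the identity $(\mathcal{L}u_f)(s)=-(\mathcal{L}a)(s)\sum_{k\ge1}\frac{s+1}{s^2+\lambda_k s+\lambda_k}\big(\int_{\partial M}\xi\phi_k\big)\varphi_k$ established just before the statement (here $\partial_\nu\varphi_k|_{\partial M}=\phi_k$ and $\mathcal{L}(\Lambda_S f)=\partial_\nu(\mathcal{L}u_f)|_{S_{\mathrm{out}}}$), then grouping the eigenfunctions sharing a common eigenvalue and invoking the definition \eqref{Phikxy} of $\Phi_k$ (the condition $\mathrm{supp}\,\xi\subset S_{\mathrm{in}}$ lets me restrict the inner integral to $S_{\mathrm{in}}$), I obtain, for $x\in S_{\mathrm{out}}$,
\[
\mathcal{L}(\Lambda_S f)(s,x)=-\,\mathcal{L}a(s)\,G(s,x),\qquad
G(s,x):=\sum_{k\ge1}\frac{s+1}{s^2+\lambda_{j_k}s+\lambda_{j_k}}\int_{S_{\mathrm{in}}}\xi(y)\,\Phi_k(x,y)\,dy .
\]
Since $a$ is fixed and $\mathcal{L}a$ is holomorphic and not identically zero on $\{\mathrm{Re}\,s>0\}$ while $G$ is holomorphic there (the denominators have no zeros with $\mathrm{Re}\,s>0$), this identity determines $G$ on all of $\{\mathrm{Re}\,s>0\}$ from the measurement $\Lambda_S f$. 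Then I would differentiate term by term: since $\partial_s\!\big(\frac{s+1}{s^2+\lambda s+\lambda}\big)=-\frac{s^2+2s}{(s^2+\lambda s+\lambda)^2}$, this gives $\partial_s G(s,x)=-(s^2+2s)\,H_S(s,x)$ with $H_S$ as in \eqref{HSs}; because $s^2+2s=s(s+2)$ is holomorphic and nowhere zero on $\{\mathrm{Re}\,s>0\}$, we conclude $H_S(s,x)=-\partial_s G(s,x)/(s^2+2s)$, which is therefore determined by $\Lambda_S$.

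The main point needing care — and the only real obstacle — is justifying these term-by-term operations. The term-by-term differentiation is legitimate by Weierstrass's theorem once the series defining $G$ converges locally uniformly on $\{\mathrm{Re}\,s>0\}$ (say in $L^2(S_{\mathrm{out}})$ as a function of $x$), and this, like the term-by-term Laplace transform used above, rests on the inputs already assembled in Section~\ref{subsec_Lambda}: for $\mathrm{Re}\,s\ge\delta>0$ and $k$ large one has $|s^2+\lambda_{j_k}s+\lambda_{j_k}|\ge c_\delta\,\lambda_{j_k}$ because the roots $\lambda_{j_k}^\pm$ lie in a fixed left half-plane, while Proposition~\ref{f32est} bounds $\sum_k\lambda_{j_k}^{-2}\big|\int_{\partial M}\xi\,\phi_{j_k,p}\big|^2$ by $C\|\xi\|_{H^{3/2}(\partial M)}^2$. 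One should also remark that $\{\mathcal{L}a=0\}\cap\{\mathrm{Re}\,s>0\}$ is a discrete set and that $G$ is the unique holomorphic extension across it of $-\mathcal{L}(\Lambda_S f)/\mathcal{L}a$, so $\Lambda_S f$ together with $a$ really does pin down $G$ on the whole half-plane. Everything else — the computation of $\mathcal{L}K_k$ recorded above, the regrouping into $\Phi_k$, and the single differentiation identity — is routine.
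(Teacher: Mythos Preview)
Your proposal is correct and follows essentially the same route as the paper: take $f=a(t)\xi$, Laplace-transform the measured normal derivative, divide by $\mathcal{L}a$, differentiate in $s$ using $\partial_s\big(\tfrac{s+1}{s^2+\lambda s+\lambda}\big)=-\tfrac{s^2+2s}{(s^2+\lambda s+\lambda)^2}$, and divide by $s^2+2s$ to isolate $H_S$. The only cosmetic difference is that the paper fixes $a$ with $\mathcal{L}a>0$ on the positive real axis so the division is immediate there (and then invokes analyticity), whereas you handle the possible complex zeros of $\mathcal{L}a$ directly via the discrete-zero/unique-extension remark; both are fine.
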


\subsection{Proof of Theorem 1.2}
For the metric $g$, we consider the (Neumann) boundary spectral data set
\begin{equation}\label{BSD}
\mathrm{BSD}(g):= \{(\lambda_k, \phi_k|_{S_{\text {in}} \cup S_{\mathrm{out}}}): k\in\mathbb{N}\}.
\end{equation}
To prove Theorem \ref{thm_Lambda}, it suffices to show that the set
$\{(\lambda_{j_k}, \Phi_k): k\in\mathbb{N}\}$ can be determined from the knowledge of the Dirchlet-to-Neumann map $\Lambda_S$, since we have the following known results.

\begin{prop}\label{phitoBSD}
The set $\{(\lambda_{j_k}, \Phi_k): k\in\mathbb{N}\}$ determines $\mathrm{BSD}(g)$ up to an orthogonal transformation on $(\phi_{j_k, 1},\cdots,\phi_{j_k, m_k})$ for each $k$.
\end{prop}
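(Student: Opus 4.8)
The plan is to reconstruct, for each distinct eigenvalue $\lambda_{j_k}$, the boundary traces $\phi_{j_k,1},\dots,\phi_{j_k,m_k}$ on $S_{\mathrm{in}}\cup S_{\mathrm{out}}$ from the kernel $\Phi_k$, up to an orthogonal transformation, by using the overlap $S:=S_{\mathrm{in}}\cap S_{\mathrm{out}}$ (nonempty and relatively open) as a reference set from which a canonical orthonormal system can be extracted.

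First I would record the linear-independence fact: the functions $\phi_{j_k,1}|_S,\dots,\phi_{j_k,m_k}|_S$ are linearly independent in $L^2(S)$. Indeed, if $\psi=\sum_p c_p\varphi_{j_k,p}$ had $\partial_\nu\psi=0$ on $S$, then since $\psi\in H^1_0(M)$ also $\psi|_{\partial M}=0$, so $\psi$ has vanishing Cauchy data on the relatively open set $S\subset\partial M$; unique continuation for $-\Delta_g-\lambda_{j_k}$ forces $\psi\equiv0$, hence $c=0$ (this is essentially \cite[Lemma 2.2]{canuto2001determining}). Consequently the integral operator $T_k$ on $L^2(S)$ with the smooth symmetric kernel $\Phi_k|_{S\times S}$ satisfies $\langle T_kh,h\rangle_{L^2(S)}=\sum_p|\langle h,\phi_{j_k,p}|_S\rangle|^2\ge0$ and has range $\mathrm{span}\{\phi_{j_k,p}|_S\}$, hence rank exactly $m_k$. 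Since $\Phi_k$, and so $T_k$, is determined by the data, so are its rank $m_k$, its positive eigenvalues $\mu_{k,1}\ge\cdots\ge\mu_{k,m_k}>0$, and a choice of orthonormal eigenfunctions $e_{k,1},\dots,e_{k,m_k}\in L^2(S)$; set $\psi_{k,q}:=\sqrt{\mu_{k,q}}\,e_{k,q}$.

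Next, writing $\phi_{j_k,p}|_S=\sum_q A^{(k)}_{pq}e_{k,q}$ (legitimate since the $\phi_{j_k,p}|_S$ lie in $\mathrm{span}\{e_{k,q}\}$), the identity $\sum_p\phi_{j_k,p}(x)\phi_{j_k,p}(y)=\sum_{q,q'}(A^{(k)\top}A^{(k)})_{qq'}e_{k,q}(x)e_{k,q'}(y)$ on $S\times S$ must equal the spectral kernel $\sum_q\mu_{k,q}e_{k,q}(x)e_{k,q}(y)$ of $T_k$; hence $A^{(k)\top}A^{(k)}=\mathrm{diag}(\mu_{k,q})$, so $A^{(k)}=Q_k\,\mathrm{diag}(\sqrt{\mu_{k,q}})$ with $Q_k:=A^{(k)}\mathrm{diag}(\mu_{k,q}^{-1/2})$ orthogonal, i.e. $(\phi_{j_k,1},\dots,\phi_{j_k,m_k})^\top=Q_k(\psi_{k,1},\dots,\psi_{k,m_k})^\top$ on $S$. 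To propagate this to all of $S_{\mathrm{in}}\cup S_{\mathrm{out}}$, fix $x\in S_{\mathrm{out}}$ and pair $\Phi_k(x,\cdot)$ (known for such $x$ and $\cdot\in S\subset S_{\mathrm{in}}$, using the symmetry $\Phi_k(x,y)=\Phi_k(y,x)$) against $e_{k,r}$ over $S$: using $\phi_{j_k,p}|_S=\sum_q(Q_k)_{pq}\sqrt{\mu_{k,q}}\,e_{k,q}$ and orthonormality of the $e_{k,q}$ gives $\langle\Phi_k(x,\cdot),e_{k,r}\rangle_{L^2(S)}=\sqrt{\mu_{k,r}}\sum_p(Q_k)_{pr}\phi_{j_k,p}(x)$. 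Thus $g_{k,r}(x):=\mu_{k,r}^{-1/2}\langle\Phi_k(x,\cdot),e_{k,r}\rangle_{L^2(S)}$, $x\in S_{\mathrm{out}}$, is determined by the data and $(\phi_{j_k,1},\dots,\phi_{j_k,m_k})^\top=Q_k(g_{k,1},\dots,g_{k,m_k})^\top$ on $S_{\mathrm{out}}$, with the \emph{same} $Q_k$. The identical computation with $x\in S_{\mathrm{in}}$ (pairing against $e_{k,r}$ over $S\subset S_{\mathrm{out}}$) yields data-determined $h_{k,r}$ on $S_{\mathrm{in}}$ with $(\phi_{j_k,p})=Q_k(h_{k,p})$ there; on the overlap $S$ one checks $g_{k,r}=h_{k,r}=\psi_{k,r}$, so the $g$'s and $h$'s glue to functions $\Psi_{k,1},\dots,\Psi_{k,m_k}$ on $S_{\mathrm{in}}\cup S_{\mathrm{out}}$ with $(\phi_{j_k,1},\dots,\phi_{j_k,m_k})=Q_k(\Psi_{k,1},\dots,\Psi_{k,m_k})$. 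Since the $\lambda_{j_k}$, the $m_k$, and the $\Psi_{k,p}$ are all extracted from $\{(\lambda_{j_k},\Phi_k)\}$, this is exactly the assertion that $\mathrm{BSD}(g)$ is determined up to an orthogonal transformation on each $(\phi_{j_k,1},\dots,\phi_{j_k,m_k})$.

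The Laplace/pairing computations are routine; the one place needing care is the unique-continuation step, which is what guarantees $\mathrm{rank}\,T_k=m_k$ and hence that the extracted $\psi_{k,q}$ actually span the trace space and that $Q_k$ is a genuine orthogonal matrix rather than an isometry onto a proper subspace. The other subtlety is purely bookkeeping: ensuring the \emph{same} $Q_k$ works simultaneously on $S$, $S_{\mathrm{in}}$ and $S_{\mathrm{out}}$, which is precisely why the common reference set $S=S_{\mathrm{in}}\cap S_{\mathrm{out}}\neq\emptyset$ is used throughout rather than treating the three pieces separately.
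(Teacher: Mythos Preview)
Your argument is correct and is essentially the approach the paper invokes: the paper gives no self-contained proof of this proposition but refers to the discussion after equation~(3.13) in \cite{canuto2001determining}, and what you have written is precisely that argument spelled out in the present setting---unique continuation on $S=S_{\mathrm{in}}\cap S_{\mathrm{out}}$ to get $\mathrm{rank}\,T_k=m_k$, spectral factorization of the finite-rank kernel $\Phi_k|_{S\times S}$ to produce a canonical system $\psi_{k,q}$ related to the $\phi_{j_k,p}$ by an orthogonal matrix $Q_k$, and then propagation of that same $Q_k$ to $S_{\mathrm{in}}\cup S_{\mathrm{out}}$ by pairing $\Phi_k$ against the $e_{k,r}$ over $S$. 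The only cosmetic point is that $\Phi_k$ is defined on $S_{\mathrm{in}}\times S_{\mathrm{out}}$, so in the step where you fix $x\in S_{\mathrm{out}}$ and integrate over $y\in S$ you are really using $\Phi_k(y,x)$ with $(y,x)\in S\times S_{\mathrm{out}}\subset S_{\mathrm{in}}\times S_{\mathrm{out}}$ and then the manifest symmetry of the formula; you already note this, so no change is needed.
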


\begin{prop}\label{BSDtog}
The set $\mathrm{BSD}(g)$, up to an orthogonal transformation on $(\phi_{j_k, 1},\cdots,\phi_{j_k, m_k})$ for each $k$, determines $g$ up to an isometry.
\end{prop}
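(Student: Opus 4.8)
The plan is to read Proposition \ref{BSDtog} as the local reconstruction theorem of the boundary control (BC) method of Belishev and Kurylev, in the form of Katchalov--Kurylev--Lassas \cite{kachalov2001inverse}. Write $\Gamma := S_{\mathrm{in}}\cup S_{\mathrm{out}}\subset\partial M$, a nonempty relatively open set. The first point to dispose of is the orthogonal-transformation ambiguity: the data $\mathrm{BSD}(g)$ enters every subsequent step only through quantities that are invariant under an orthogonal change of basis inside each eigenspace, namely the eigenvalues $\lambda_k$ together with the spectral projector kernels $\Phi_k(x,y)=\sum_{p}\phi_{j_k,p}(x)\phi_{j_k,p}(y)$ on $\Gamma\times\Gamma$ (equivalently, the Neumann response operator restricted to sources supported in $\Gamma$). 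So it is harmless to fix, once and for all, an arbitrary orthonormal eigenbasis, and it suffices to prove that $(M,g)$ is determined up to isometry by $\{(\lambda_k,\phi_k|_\Gamma)\}_k$ --- with $\Gamma$ and its induced boundary metric regarded as known, which is built into the meaning of comparing the data of $g$ and $\tilde g$, and is in any case guaranteed by the hypothesis $g=\tilde g$ on $\partial M$ of Theorem \ref{thm_Lambda}.

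Second, from this data I would reconstruct the solution operator of the auxiliary undamped wave equation $(\dt^2-\Delta_g)w=0$, $w|_{\partial M}=f$, $w(0)=\dt w(0)=0$, for boundary sources $f\in C^\infty_c((0,\infty)\times\Gamma)$. Exactly as in the well-posedness computation near \eqref{phik}, expanding $w=\sum_k w_k(t)\varphi_k$ gives $w_k''+\lambda_k w_k=-\langle f,\phi_k\rangle_{\partial M}$, so $w_k(t)=-\int_0^t\frac{\sin(\sqrt{\lambda_k}(t-\tau))}{\sqrt{\lambda_k}}\langle f(\tau),\phi_k\rangle_{\partial M}\,d\tau$, which depends only on $\lambda_k$ and $\phi_k|_\Gamma$ (using the known boundary measure on $\Gamma$). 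Consequently the Blagovescenskii identity $\langle w^{f_1}(T),w^{f_2}(T)\rangle_{L^2(M)}=\sum_k w_k^{f_1}(T)\overline{w_k^{f_2}(T)}$ is entirely determined by $\{(\lambda_k,\phi_k|_\Gamma)\}$ for all $f_1,f_2$ supported in $(0,T)\times\Gamma$, and the block-diagonal structure shows it is expressed purely through the kernels $\Phi_k$, so the orthogonal ambiguity truly does not enter.

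Third, I would invoke Tataru's sharp unique continuation theorem for the wave operator: it implies that $\{w^f(T):f\in C^\infty_c((0,T)\times\Gamma)\}$ is dense in $L^2\big(M(\Gamma,T)\big)$, where $M(\Gamma,T)=\{x\in M:\mathrm{dist}_g(x,\Gamma)<T\}$ is the domain of influence. Combined with the second step, this means the data determine the nested family of subspaces $L^2(M(\Gamma,T))\subset L^2(M)$, $T>0$, together with the $L^2$ inner product on them; from this one recovers, by the Gel'fand-type procedure of \cite{kachalov2001inverse} (slicing $M$ by the distance-to-$\Gamma$ coordinate, computing volumes of intersections of such domains, and using the local BC equations / layer stripping), the Riemannian structure of $\bigcup_{T>0}M(\Gamma,T)$. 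Since $M$ is compact and connected and $\Gamma$ is nonempty, $\mathrm{dist}_g(\cdot,\Gamma)$ is bounded, so $M(\Gamma,T)=M$ for $T$ large; letting $T$ range over $(0,\infty)$ therefore reconstructs all of $(M,g)$ up to isometry. Running the identical construction on $\tilde g$ and comparing outputs yields the desired isometry.

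The main obstacle is the controllability/unique-continuation input of the third step: Tataru's theorem is what allows the BC method to run for an \emph{arbitrary} smooth metric with no geometric (e.g. non-trapping) assumption, and it is the nontrivial analytic ingredient; the manifold reconstruction that follows is classical but technically heavy, and the only genuinely new bookkeeping here is checking that one may work from the single open piece $\Gamma=S_{\mathrm{in}}\cup S_{\mathrm{out}}$ of the boundary rather than all of $\partial M$, which is exactly what the local version of the method provides. A cleaner alternative, if one wants to avoid reproving anything, is to cite the local boundary-spectral-data reconstruction theorem of \cite{kachalov2001inverse} (or its variants in \cite{katchalov2004equivalence,lassas2010inverse}) directly.
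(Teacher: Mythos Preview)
Your proposal is correct. The paper itself does not supply a proof of this proposition at all: it simply refers the reader to Theorem~4.33 in \cite{kachalov2001inverse} and the remarks after equation~(5.2) in \cite{kian2018global}, which is precisely the ``cleaner alternative'' you suggest in your last sentence. What you have written is a faithful outline of the BC-method argument underlying that cited result --- the reduction to the orthogonal-invariant kernels $\Phi_k$, the reconstruction of the inner products $\langle w^{f_1}(T),w^{f_2}(T)\rangle_{L^2(M)}$ from $\{(\lambda_k,\phi_k|_\Gamma)\}$, Tataru's unique continuation for approximate controllability, and the local reconstruction from a single open boundary piece $\Gamma=S_{\mathrm{in}}\cup S_{\mathrm{out}}$ --- and your observation that the hypothesis $g=\tilde g$ on $\partial M$ is what fixes the boundary measure needed to pair $f$ with $\phi_k$ is exactly the point addressed in the cited passage of \cite{kian2018global}.
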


We refer readers to the proof of Theorem 1.1, after equation (3.13), in \cite{canuto2001determining} for a detailed discussion on Proposition \ref{phitoBSD}. Additionally, for insights into Proposition \ref{BSDtog},
we guide readers to refer to Theorem 4.33 in \cite{kachalov2001inverse} and the subsequent remarks post equation (5.2) in \cite{kian2018global}.

Note that for each $z\neq -1$, we have $\lambda_k/(z^2+\lambda_{k} z+ \lambda_{k})^2\to 1/\lambda_k$
as $k\to \infty$.
Hence, given the convergence of $\sum^\infty_{k=1} \lambda_k b_k \varphi_k$ in $L^2(M)$ implying the convergence of $\sum^\infty_{k=1}  b_k \varphi_k$ in $H^2(M)$,  and considering the boundedness of map $u\to \partial_\nu u|_{\partial M}$ from $H^2(M)$ to $H^{1/2}(\partial M)$,  Proposition \ref{f32est}
along with the analytic continuation ensure that (\ref{HSs}) gives a
$H^{1/2}(S_{\mathrm{out}})$-valued meromorphic function
$H_S(z)$, in $\mathbb{C}\setminus\{-1\}$ with poles
$\{\lambda_{j_k}^{\pm}\}$, for each fixed $\xi$.

As in Subsection \ref{subsec_L2}, we have different poles for different $\lambda$.
Moreover, note that
\begin{equation}\label{ckHS}
\lim_{z\to \lambda^+_{j_k}}c_k(z- \lambda^+_{j_k})^{l_k}H_S(z)= \int_{\partial M}\xi(y)\Phi_k(x, y)\,dy,
\end{equation}
where we set $c_k= 1$, $l_k= 4$ if $\lambda_{j_k}= 4$, and we set
$c_k= (\lambda^+_{j_k}-\lambda^-_{j_k})^2$, $l_k= 2$ if $\lambda_{j_k}\neq 4$.
Now based Proposition \ref{phitoBSD} and Proposition \ref{BSDtog}, we complete the proof of Theorem \ref{thm_singleLambda} by proving the following proposition.
\begin{prop}\label{PfTh2}
Let ${H}_S(z), \tilde{H}_S(z)$ be the meromorphic functions
and $\Phi_k, \tilde{\Phi}_k$ be defined in (\ref{Phikxy}), corresponding to $g, \tilde{g}$ respectively.
Suppose $H_S(z)= \tilde{H}_S(z)$ for each $\xi\in H^{3/2}(\partial M)$ with $\mathrm{supp}\,\xi\subset S_{\mathrm{in}}$.
Then we have
\begin{equation}\label{lamphieq}
\{(\lambda_{j_k}, \Phi_k): k\in\mathbb{N}\}=
\{(\tilde{\lambda}_{\tilde{j}_k}, \tilde{\Phi}_k): k\in\mathbb{N}\}.
\end{equation}
\end{prop}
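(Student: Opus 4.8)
The plan is to mirror the proof of Proposition \ref{PfTh1} almost verbatim, replacing the projection operators $P_{W,k}$ there by the $H^{1/2}(S_{\mathrm{out}})$-valued quantities $\xi \mapsto \int_{\partial M}\xi(y)\Phi_k(x,y)\,dy$, and exploiting the fact (already recorded in Subsection \ref{subsec_L2} and reiterated before \eqref{ckHS}) that distinct positive eigenvalues produce disjoint pole sets $\{\lambda^{\pm}\}$. First I would fix an arbitrary $\xi \in H^{3/2}(\partial M)$ with $\mathrm{supp}\,\xi \subset S_{\mathrm{in}}$ and an index $k \in \mathbb{N}$. By \eqref{ckHS}, the residue-type limit
$$
\lim_{z\to \lambda^+_{j_k}} c_k (z - \lambda^+_{j_k})^{l_k} H_S(z) = \int_{\partial M}\xi(y)\Phi_k(x,y)\,dy
$$
is computed on the $g$-side. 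The hypothesis $H_S(z) = \tilde H_S(z)$ lets me compute the same limit on the $\tilde g$-side. Here I must be a little careful: the constants $c_k, l_k$ are defined in terms of the $g$-eigenvalue $\lambda_{j_k}$ (namely $l_k=4$, $c_k=1$ if $\lambda_{j_k}=4$, else $l_k=2$, $c_k=(\lambda^+_{j_k}-\lambda^-_{j_k})^2$), so I should phrase the argument as: the point $\lambda^+_{j_k}$ is a pole of $\tilde H_S$ only if $\lambda^+_{j_k} \in \{\tilde\lambda_{\tilde j_l}^{\pm} : l\}$, which by the injectivity of $\lambda \mapsto \{\lambda^{\pm}\}$ happens precisely when $\lambda_{j_k} = \tilde\lambda_{\tilde j_l}$ for some $l$, and in that case the orders and leading coefficients match so the RHS equals $\int_{\partial M}\xi(y)\tilde\Phi_l(x,y)\,dy$; otherwise $\lambda^+_{j_k}$ is a regular point of $\tilde H_S$ and the limit is $0$.

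Next I would promote the pointwise-in-$\xi$ conclusion to equality of kernels. If $\lambda_{j_k} \notin \{\tilde\lambda_{\tilde j_l}\}$ then $\int_{\partial M}\xi(y)\Phi_k(x,y)\,dy = 0$ in $H^{1/2}(S_{\mathrm{out}})$ for every admissible $\xi$; since admissible $\xi$ are dense enough in $L^2(S_{\mathrm{in}})$ this forces $\Phi_k \equiv 0$ on $S_{\mathrm{out}} \times S_{\mathrm{in}}$, contradicting the non-vanishing statement from \cite[Lemma 2.2]{canuto2001determining} quoted after \eqref{Phikxy} (note $\Phi_k$ is not the zero kernel because the $\phi_{j_k,p}$ are linearly independent restrictions of Neumann traces of eigenfunctions). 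Hence every $g$-eigenvalue $\lambda_{j_k}$ must coincide with some $\tilde\lambda_{\tilde j_l}$, and then the displayed limit gives $\int_{\partial M}\xi(y)\Phi_k(x,y)\,dy = \int_{\partial M}\xi(y)\tilde\Phi_l(x,y)\,dy$ for all admissible $\xi$, whence $\Phi_k = \tilde\Phi_l$ on $S_{\mathrm{out}} \times S_{\mathrm{in}}$ (and by symmetry of both kernels, on $S_{\mathrm{in}} \times S_{\mathrm{out}}$ as well, which is the domain appearing in \eqref{Phikxy}). This shows $(\lambda_{j_k}, \Phi_k) \in \{(\tilde\lambda_{\tilde j_l}, \tilde\Phi_l): l\}$ for every $k$, i.e. the left-hand set of \eqref{lamphieq} is contained in the right-hand set. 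The reverse inclusion follows by the symmetric argument, interchanging the roles of $g$ and $\tilde g$, giving \eqref{lamphieq}.

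Finally, combining \eqref{lamphieq} with Proposition \ref{phitoBSD} and Proposition \ref{BSDtog} yields that $\mathrm{BSD}(g)$ equals $\mathrm{BSD}(\tilde g)$ up to an orthogonal transformation on each eigenspace, and hence $(M,g)$ and $(M,\tilde g)$ are isometric, completing the proof of Theorem \ref{thm_Lambda}; together with Proposition \ref{DNHS} this closes the chain from $\Lambda_S = \tilde\Lambda_S$ to isometry. The only genuinely nonroutine point is the density/unique-continuation step used to pass from ``the functional against every admissible $\xi$ vanishes'' to ``the kernel $\Phi_k$ vanishes''; everything else is bookkeeping with meromorphic continuations and the disjointness of the pole sets $\{\lambda^{\pm}\}$ for distinct $\lambda$, already established earlier in the paper. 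One should also double-check that the set of admissible $\xi$ (i.e. $H^{3/2}(\partial M)$ functions supported in $S_{\mathrm{in}}$) is rich enough to separate the continuous-in-$y$ kernels $\Phi_k(x,\cdot)$ on $S_{\mathrm{in}}$, which it is, since $C_c^\infty(S_{\mathrm{in}}) \subset H^{3/2}(\partial M)$ and is dense in $L^2(S_{\mathrm{in}})$.
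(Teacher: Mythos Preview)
Your proposal is correct and follows essentially the same route as the paper's proof: the paper simply says to repeat the residue/pole argument of Proposition \ref{PfTh1} using \eqref{ckHS} to obtain, for each fixed admissible $\xi$, the equality of the sets $\{(\lambda_{j_k}, \int_{\partial M}\xi\,\Phi_k)\}$ and $\{(\tilde\lambda_{\tilde j_k}, \int_{\partial M}\xi\,\tilde\Phi_k)\}$, and then declares that this implies \eqref{lamphieq}. You have merely made explicit the two steps the paper leaves tacit---the use of the non-vanishing of $\Phi_k$ (from \cite[Lemma 2.2]{canuto2001determining}, quoted after \eqref{Phikxy}) to rule out the case $\lambda_{j_k}\notin\{\tilde\lambda_{\tilde j_l}\}$, and the density of $C_c^\infty(S_{\mathrm{in}})$ in $L^2(S_{\mathrm{in}})$ to pass from equality of the integrals to equality of the kernels.
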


\begin{proof}
We can use the same argument as in the proof of
Proposition \ref{PfTh1}  to show
$$\{(\lambda_{j_k}, \int_{\partial M}\xi(y)\Phi_k(x, y)\,dy): k\in\mathbb{N}\}
= \{(\tilde{\lambda}_{\tilde{j}_k}, \int_{\partial M}\xi(y)\tilde{\Phi}_k(x, y)\,dy): k\in\mathbb{N}\}$$
based on (\ref{ckHS}),
for each $\xi\in H^{3/2}(\partial M)$ with $\mathrm{supp}\,\xi\subset S_{\mathrm{in}}$, which
implies (\ref{lamphieq}).
\end{proof}

\section{Single measurement}\label{sec_single}
In this section, we construct an appropriate function $h$, through which we can determine the metric $g$ up to an isometry from the single measurement $L_W h$ or $\Lambda_S h$. This is possible due to the convolution form and the time analyticity of the solution of (\ref{stdampclosed}) or (\ref{stdampwith}).

First, we consider the case of a closed manifold $(M,g)$ as discussed in Section \ref{sec_closed}.
The following proposition is an analogue of \cite[Proposition 17]{helin2020inverse}, which suggests that we can measure on a finite time interval instead of the whole $(0, \infty)$.

\begin{prop}\label{Ttoinfty}
For $f\in C^2_c((0, T); L^2(W))$ with $\langle f(t), 1\rangle = 0$, the knowledge of
$L_W f$ on $(0, T)$ determines $L_W f$ on $(0, \infty)$.
\end{prop}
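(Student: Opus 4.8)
The plan is to exploit the convolution structure of the solution together with the time-analyticity of the kernels $K_k$ on $(0,\infty)$. Recall from Proposition \ref{wellpose1} that $u_f = \sum_{k\geq 1} (K_k * f_k)(t)\varphi_k$, where $f_k(t) = \langle f(t),\varphi_k\rangle$ and $K_k$ is an explicit exponential-polynomial (cases (i)--(iii)), hence each $K_k$ extends to an entire function of $t$. Since $\mathrm{supp}\, f \subset (0,T)$, for $t \geq T$ we have
$$
(K_k * f_k)(t) = \int_0^T K_k(t-\tau) f_k(\tau)\, d\tau,
$$
and the integrand is, for fixed $\tau$, analytic in $t$. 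First I would argue that $t \mapsto u_f(t)$ is real-analytic as an $L^2(W)$-valued (indeed $H^2(M)$-valued) function on $(0,\infty)$: this follows by differentiating the series term by term and re-running the uniform-convergence estimate \eqref{ukest} — the bound $|u_k^{(j)}(t)|^2 \lesssim \lambda_k^{-2} T_0 \|f_k^{(j)}\|_{L^2(0,T_0)}^2$ from Proposition \ref{wellpose1} can be upgraded to control all $t$-derivatives of $K_k * f_k$ on any compact subinterval of $(0,\infty)$ uniformly, because the derivatives of $K_k$ only bring down bounded factors (the relevant roots $\lambda_k^\pm$ satisfy $|\lambda_k^\pm| \leq \lambda_k$ and $\lambda_k^+ \to -1$, $\lambda_k^-/\lambda_k \to -1$). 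Thus $u_f|_{W\times(0,\infty)}$ is a real-analytic $L^2(W)$-valued function of $t$.

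The second step is the unique continuation for real-analytic functions: a real-analytic function on the connected interval $(0,\infty)$ that is known on the subinterval $(0,T)$ is determined on all of $(0,\infty)$. Concretely, $L_W f$ on $(0,T)$ determines all the Taylor coefficients of $t \mapsto L_W f(t)$ at any point $t_0 \in (0,T)$, hence determines the analytic continuation, which is $L_W f$ on $(0,\infty)$. This is exactly the analogue of \cite[Proposition 17]{helin2020inverse}, and I would cite that structure while supplying the analyticity input specific to our kernels.

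The main obstacle — and the only genuinely technical point — is establishing the $L^2(W)$-valued (or $H^2(M)$-valued) real-analyticity of $u_f$ on $(0,\infty)$ rigorously, i.e. producing a convergent majorant for the power series $\sum_{j} \frac{(t-t_0)^j}{j!} \partial_t^j u_f(t_0)$ with a uniform radius of convergence. The delicate regime is the cluster of eigenvalues with $\lambda_k$ near $4$ (cases (i)/(iii) degenerating to (ii)), where $\lambda_k^+ - \lambda_k^-$ is small and the prefactors in $K_k$ blow up; however, only finitely many eigenvalues lie in any neighborhood of $4$, so these contribute finitely many entire terms and do not affect convergence of the tail. For the tail $\lambda_k > 4 + \delta$, one has $|K_k^{(j)}(t)| \lesssim (j+1)\, 2^{j}\, C^j$ uniformly (since $|\lambda_k^\pm| < 2$ once $\lambda_k^+ \in (-2,-1)$ and... more carefully, $|\lambda_k^-|$ can be as large as $\lambda_k$, so one keeps the factor $e^{\lambda_k^- t}$ which decays and the factor $1/(\lambda_k^+-\lambda_k^-) \sim 1/\lambda_k$ which helps), so differentiating under the integral and summing in $k$ against Proposition \ref{wellpose1}'s $\ell^2$ control of $(f_k^{(j)})$ gives a geometric bound in $j$ with a $t$-independent radius. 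I would organize this as: (1) reduce to the tail $\lambda_k$ large by splitting off finitely many entire terms; (2) estimate $\|K_k^{(j)}\|_{L^\infty_{\mathrm{loc}}}$ with explicit $j$-dependence; (3) assemble the $H^2(M)$-valued analyticity; (4) invoke analytic unique continuation in $t$ on the connected set $(0,\infty)$ to conclude.
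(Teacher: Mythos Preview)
Your overall strategy---analyticity of $t\mapsto L_Wf(t)$ followed by unique continuation in $t$---is the right one and matches the paper. However, there is a genuine gap: you claim that $u_f$ is real-analytic on the \emph{whole} interval $(0,\infty)$, and your justification invokes the bound $|u_k^{(j)}(t)|^2 \lesssim \lambda_k^{-2} T_0 \|f_k^{(j)}\|_{L^2}^2$ for all $j$. But $f\in C^2_c$ only, so $f_k^{(j)}$ need not exist for $j\geq 3$; indeed, while the source is acting the solution inherits at best $C^2$ regularity in $t$ (one checks $K_k(0)=0$, $K_k'(0)=1$, so $(K_k*f_k)'' = f_k + K_k''*f_k$ and further $t$-derivatives require derivatives of $f_k$). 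Thus $u_f$ is \emph{not} real-analytic on $(0,T-\epsilon)$ in general, and your ``Taylor coefficients at any $t_0\in(0,T)$'' step fails.

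The paper avoids this by restricting attention to $t>T-\epsilon$, where $\mathrm{supp}_t f\subset(0,T-\epsilon)$ has already been chosen. On that region the convolution has fixed limits, $G_k(t)=\int_0^{T-\epsilon}K_k(t-\tau)f_k(\tau)\,d\tau$, and each $G_k$ is entire in $t$ without any smoothness demand on $f_k$ beyond integrability. The paper then passes to complex $z$ with $\mathrm{Re}\,z>T-\epsilon$: since $\lambda_k^\pm<0$ for large $k$, one has $|e^{\lambda_k^\pm(z-\tau)}|\leq 1$ there, and the estimate \eqref{ukest} (with $j=0$, no derivatives of $f$ needed) gives uniform convergence of $\sum G_k(z)\varphi_k|_W$ on the half-plane. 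This yields $L^2(W)$-valued holomorphy on $\{\mathrm{Re}\,z>T-\epsilon\}$, and analytic continuation from the real segment $(T-\epsilon,T)$ then determines $L_Wf$ on $(T-\epsilon,\infty)$. Your real-analytic Taylor-series route could be salvaged by making exactly this restriction to $t>T-\epsilon$, but the complex-variable argument is cleaner: it replaces your derivative bookkeeping (and the worry about $|\lambda_k^-|\sim\lambda_k$) by a single modulus bound on exponentials.
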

\begin{proof}
We choose small $\epsilon> 0$ such that $\mathrm{supp}_t\,f\subset (0, T-\epsilon)$. By Proposition \ref{wellpose1}, we have
$$L_W f= \sum^\infty_{k=1}G_k(t)\varphi_k|_W,
\qquad G_k(t):=\int^T_0 K_k(t-\tau)f_k(\tau)\,d\tau,$$
for $t> T-\epsilon$.
Here $K_k, f_k$ are the same as in the proof of Proposition \ref{wellpose1}.
Observe that the  analyticity of $K_k(z)$ implies the analyticity of
$G_k(z)$.
Moreover, note that for sufficiently large $k$ with $\lambda_k > 4$, we have $\lambda^{\pm}_k< 0$.
Then for $z$ with
$\mathrm{Re}\,z> T-\epsilon$ and $0< \tau< T-\epsilon$, we have
$$|e^{\lambda^{\pm}_k(z-\tau)}|\leq 1.$$
It follows that we can estimate as in (\ref{ukest}) to show the uniform convergence of
$\sum^\infty_{k=1}G_k(z)\varphi_k|_W$ on $\{z: \mathrm{Re}\,z> T-\epsilon\}$. Hence $L_W f(z)$ is $L^2(W)$-valued analytic on $\{z: \mathrm{Re}\,z> T-\epsilon\}$. In particular, the analytic continuation implies that $L_W f$ on $(T-\epsilon, T)$ determines $L_W f$ on $(T-\epsilon, \infty)$.
\end{proof}

Now we choose $\psi_k$ such that $\mathrm{span}\,\{\psi_k\}$ is dense in the orthogonal supplement of $\mathrm{span}\,\{1\}$ in  $L^2(W)$. We define the function
\begin{equation}\label{SMh}
h:= \sum^\infty_{k=1}h_k(t)\psi_k.
\end{equation}
Here the nonzero non-negative function $h_k\in C^\infty_c((0, \infty))$ satisfies that
$$\mathrm{supp}\,h_k\subset (t_{k-1}, t_k),$$
where $0= t_0< t_1<\cdots$ and $\lim t_k\leq T$ for some constant $T> 0$.
We also require the series in (\ref{SMh}) converges uniformly in $t\in \mathbb{R}$, and it defines
$h\in C^\infty_c((0, T); L^2(W))$. An explicit construction of $h_k$ can be found in \cite[Section 4.1]{helin2020inverse} (see Figure 1 and Proposition 15 there).
Next, we prove the following proposition.

\begin{prop}\label{LWhphi}
The knowledge of $L_W h$ on $(0, T)$ determines
$L_W (h_k\psi_k)$ on $(0, \infty)$ for each $k$.
\end{prop}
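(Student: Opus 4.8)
The plan is to exploit the disjoint‐support structure of the pieces $h_k\psi_k$ together with the time‐analyticity established in Proposition \ref{Ttoinfty}, peeling off one term at a time. First I would note that, since $\mathrm{supp}\,h_k\subset(t_{k-1},t_k)$ with the $t_k$ increasing to at most $T$, on the initial interval $(0,t_1)$ only the single source $h_1\psi_1$ is active: there $h(t)=h_1(t)\psi_1$, so $L_W h=L_W(h_1\psi_1)$ on $(0,t_1)$. Because $h_1\psi_1\in C^2_c((0,t_1);L^2(W))$ with $\langle h_1\psi_1,1\rangle=0$ (the $\psi_k$ lie in the orthogonal complement of $\mathrm{span}\{1\}$), Proposition \ref{Ttoinfty} applies and the knowledge of $L_W(h_1\psi_1)$ on $(0,t_1)$ determines it on all of $(0,\infty)$.

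The inductive step proceeds similarly. Suppose we have already determined $L_W(h_j\psi_j)$ on $(0,\infty)$ for all $j\le k-1$. By linearity of the source-to-solution map (which is legitimate on this class by Proposition \ref{wellpose1}), on the interval $(0,t_k)$ we have
$$
L_W h = \sum_{j=1}^{k-1} L_W(h_j\psi_j) + L_W(h_k\psi_k),
$$
since the sources $h_j\psi_j$ for $j\ge k+1$ are supported in $(t_k,\infty)$ and hence, by causality of \eqref{stdampclosed} — or, more directly, by the convolution representation $u_k=K_k*f_k$, which vanishes before the source turns on — contribute nothing to the solution on $(0,t_k)$. Here one should also justify that the full series $\sum_j L_W(h_j\psi_j)$ is the solution generated by $h$; this follows from the uniform convergence of \eqref{SMh} in $C^2_c((0,T);L^2(W))$ together with the continuity estimate \eqref{ukest} used in Proposition \ref{wellpose1}, so that $L_W(\sum_j h_j\psi_j)=\sum_j L_W(h_j\psi_j)$. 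Rearranging, the known quantity $L_W h$ on $(0,t_k)$ minus the (already determined) finite sum yields $L_W(h_k\psi_k)$ on $(0,t_k)$, and then Proposition \ref{Ttoinfty} again upgrades this to knowledge on $(0,\infty)$.

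I expect the only genuinely delicate point to be the interchange of the source-to-solution map with the infinite sum defining $h$, i.e. verifying that $L_W h=\sum_k L_W(h_k\psi_k)$ with convergence in an appropriate topology on $(0,T)$ (locally uniform in $t$), and that each partial source satisfies the hypotheses of Propositions \ref{wellpose1} and \ref{Ttoinfty}. Both are handled by the same estimate \eqref{ukest}: the bound on $|u_k^{(j)}(t)|$ in terms of $\|f_k^{(j)}\|_{L^2}$ is linear and uniform in $k$, so summing over the pieces and using the uniform convergence of $\sum_k h_k\psi_k$ in $C^2_c((0,T);L^2(W))$ gives the desired convergence of the solutions. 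The causality/vanishing-before-support claim is immediate from the Duhamel formula $u_k=K_k*f_k$ and the fact that the convolution of a function supported in $[t_{k-1},\infty)$ with $K_k$ is supported in $[t_{k-1},\infty)$, so truncating the sum at index $k$ is exact on $(0,t_k)$. With these in hand the induction closes and the proposition follows.
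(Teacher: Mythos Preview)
Your proposal is correct and follows essentially the same inductive peeling argument as the paper: use the disjoint time supports so that on $(0,t_k)$ only the first $k$ pieces contribute (by the convolution/causality structure of $L_W$), subtract the already-determined terms, and then invoke Proposition~\ref{Ttoinfty} to extend $L_W(h_k\psi_k)$ from $(0,t_k)$ to $(0,\infty)$. The paper handles the infinite-sum issue slightly more directly by writing $h=\sum_{k\le j+1}h_k\psi_k+(\text{tail})$ and using causality on the tail (which vanishes on $(0,t_{j+1})$), so the interchange $L_W(\sum)=\sum L_W$ that you flag as delicate is in fact bypassed; but your treatment of it is also fine.
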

\begin{proof}
We prove the statement by induction. Suppose it holds for $1,2,\cdots, j$. Note that $h_k= 0$ on
$(0, t_{j+1})$ for $k> j+1$, so
$\sum^\infty_{k= j+1}h_k\psi_k= 0$ on
$(0, t_{j+1})$ and the convolution formula for $L_W$ implies $L_W(\sum^\infty_{k= j+1}h_k\psi_k)= 0$ on $(0, t_{j+1})$. Hence we have
$$L_W h= L_W(\sum^{j+1}_{k= 1}h_k\psi_k)$$
on $(0, t_{j+1})$. The induction hypothesis implies
$L_W h$ on $(0, T)$ determines $L_W(h_{j+1}\psi_{j+1})$ on $(0, t_{j+1})$. Since
$\mathrm{supp}\,h_{j+1}\subset (t_{j}, t_{j+1})$,
this further determines $L_W(h_{j+1}\psi_{j+1})$ on $(0, \infty)$  by Proposition \ref{Ttoinfty}.
\end{proof}

Based on Proposition \ref{LWhphi}, the knowledge of $L_W h$ on $(0, T)$ determines
\begin{equation}\label{HWP}
\sum^\infty_{k=1}\frac{1}{s^2+\lambda_{j_k} s+ \lambda_{j_k} }P_k\xi,\qquad \mathrm{Re}\,s> 0
\end{equation}
for $\xi= \psi_k$ in the proof of
Proposition \ref{LWHW}. Then (\ref{HWP}) is determined for a general $\xi\in L^2(W)$ with $\langle\xi, 1\rangle= 0$ by the denseness of $\mathrm{span}\,\{\psi_k\}$. Hence Theorem 1.1 can be strengthened into the following single measurement result.
\begin{thm}\label{thm_singleL}
Suppose $(M, g)$, $(M, \tilde{g})$ are smooth connected closed Riemannian manifolds of dimension $n \geq 2$.
Let $W\subset M$ a nonempty open subset
and let
$L_W, \tilde{L}_W$ be the source-to-solution maps corresponding to $g, \tilde{g}$.
Suppose
\begin{equation}
L_W h= \tilde{L}_W h
\end{equation}
on $(0, T)$ for $h$ defined in (\ref{SMh}). Then $(M, g)$ and $(M, \tilde{g})$ are isometric.
\end{thm}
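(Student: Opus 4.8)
The plan is to assemble the pieces already developed for $L_W$ and promote the infinite-time, infinitely-many-measurement statement of Theorem \ref{thm_L} into a statement requiring only the single source $h$ observed on the finite window $(0,T)$. First I would invoke Proposition \ref{LWhphi}: the hypothesis $L_W h = \tilde L_W h$ on $(0,T)$, together with the fact that both sides are built from the \emph{same} convolution kernels' support structure (the supports of the $h_k$ are dictated by the intervals $(t_{k-1},t_k)$, which are fixed independently of the metric), forces $L_W(h_k\psi_k)$ on $(0,T)$ to agree with $\tilde L_W(h_k\psi_k)$ on $(0,T)$ for every $k$; then Proposition \ref{Ttoinfty} (applied to both metrics) upgrades this agreement to all of $(0,\infty)$. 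One subtlety to check is that Proposition \ref{Ttoinfty} is stated for a fixed metric, so I would apply it separately to $g$ and to $\tilde g$ and then compare; since $h_k\psi_k \in C^2_c((0,t_k); L^2(W))$ with $\langle h_k\psi_k(t),1\rangle = h_k(t)\langle \psi_k,1\rangle = 0$ (because $\psi_k$ was chosen in the orthogonal complement of $\mathrm{span}\{1\}$), the hypotheses of that proposition are met.

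Next I would run the Laplace-transform computation of Proposition \ref{LWHW} with the specific source $h_k\psi_k$ in place of the generic $a(t)\xi$: taking $\xi = \psi_k$ and $a(t) = h_k(t)$ (which is nonzero, non-negative, compactly supported in $(0,\infty)$, hence has $\mathcal L h_k > 0$ on $(0,\infty)$ — this is exactly the property required of $a$ in that proof), the knowledge of $L_W(h_k\psi_k)$ on $(0,\infty)$ determines the operator-valued expression $\sum_{k'} \frac{1}{s^2 + \lambda_{j_{k'}} s + \lambda_{j_{k'}}} P_{k'}\psi_k$ for $\mathrm{Re}\,s > 0$, and likewise for $\tilde g$. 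Because this holds for every $k$ and $\mathrm{span}\{\psi_k\}$ is dense in the orthogonal complement of $\mathrm{span}\{1\}$ in $L^2(W)$, and because the map $\xi \mapsto H_W(s)\xi$ is bounded on $L^2(W)$ for each such $s$, I can pass to the limit and conclude $H_W(s) = \tilde H_W(s)$ on $\mathrm{Re}\,s > 0$ — i.e. exactly the hypothesis of Proposition \ref{LWHW}'s conclusion, which is the input to the rest of the Theorem \ref{thm_L} argument.

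From here the proof is a citation of work already done: by analytic continuation $H_W(z) = \tilde H_W(z)$ as meromorphic functions on $\mathbb{C}\setminus\{-1\}$ (the argument after Proposition \ref{classicstosol}), Proposition \ref{PfTh1} then gives $\mathrm{SD}(g) = \mathrm{SD}(\tilde g)$, and Proposition \ref{classicstosol} (the hyperbolic source-to-solution result of \cite{helin2018correlation}) yields that $(M,g)$ and $(M,\tilde g)$ are isometric. So the body of the proof is essentially: Proposition \ref{LWhphi} $\Rightarrow$ Proposition \ref{Ttoinfty} $\Rightarrow$ Laplace transform as in Proposition \ref{LWHW} with $\xi$ ranging over the dense family $\{\psi_k\}$ $\Rightarrow$ $H_W = \tilde H_W$ $\Rightarrow$ invoke Proposition \ref{PfTh1} and Proposition \ref{classicstosol}.

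The main obstacle I anticipate is not any single hard estimate — all the analytic estimates were carried out in Propositions \ref{wellpose1}, \ref{Ttoinfty}, and \ref{LWhphi} — but rather the bookkeeping of the density/limiting argument: one must be careful that determining $\sum_{k'} \frac{1}{s^2+\lambda_{j_{k'}}s+\lambda_{j_{k'}}} P_{k'}\xi$ for $\xi$ in a dense subspace genuinely determines it for all admissible $\xi$, which requires the uniform (in the dense family) boundedness of the operators $H_W(s)$ — available since each $H_W(s) \in B(L^2(W))$ — and continuity of the whole construction in $\xi$. A second point to handle with care is ensuring the induction in Proposition \ref{LWhphi} transfers cleanly to the two-metric comparison: since that proposition's statement is about a single metric, the cleanest route is to note that the quantity ``$L_W(h_{j+1}\psi_{j+1})$ on $(0,\infty)$'' is, by the proposition, a function of the data ``$L_W h$ on $(0,T)$'' via a metric-independent procedure (restriction to subintervals, subtraction, analytic continuation), so equality of the data forces equality of the outputs. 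Once these two bookkeeping points are dispatched, the theorem follows immediately.
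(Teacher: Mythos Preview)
Your proposal is correct and follows essentially the same route as the paper: invoke Proposition~\ref{LWhphi} (with Proposition~\ref{Ttoinfty} built in) to recover each $L_W(h_k\psi_k)$ on $(0,\infty)$, run the Laplace-transform computation of Proposition~\ref{LWHW} with $\xi=\psi_k$, pass to general $\xi$ by density, and then feed $H_W=\tilde H_W$ into Proposition~\ref{PfTh1} and Proposition~\ref{classicstosol}. Your additional bookkeeping remarks (applying the single-metric Propositions~\ref{Ttoinfty} and~\ref{LWhphi} separately to $g$ and $\tilde g$ via a metric-independent procedure, and the boundedness of $H_W(s)$ needed for the density step) are exactly the points the paper leaves implicit.
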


For the manifold $M$ with boundary in Section \ref{sec_boundary}, we can consider a similar $h$ of the form (\ref{SMh}),
where we choose $\psi_k$ such that $\mathrm{span}\,\{\psi_k\}$ is dense in $H^{3/2}(S_{\mathrm{in}})$ instead.
Then we can prove the following single measurement version of Theorem \ref{thm_Lambda} in the same way.

\begin{thm}\label{thm_singleLambda}
Suppose $(M, g)$, $(M, \tilde{g})$ are smooth connected compact Riemannian manifolds with boundary of dimension $n \geq 2$.
Let $S_{\mathrm{in}}, S_{\text {out}} \subset \partial M$ be relatively open subsets satisfying $S_{\text {in}} \cap S_{\mathrm{out}}\neq \emptyset$.
Let
$\Lambda_S, \tilde{\Lambda}_S$ be the Dirichlet-to-Neumann maps corresponding to $g, \tilde{g}$.
Suppose $g= \tilde{g}$ on $\partial M$ and
\begin{equation}
\Lambda_S h= \tilde{\Lambda}_S h
\end{equation}
on $(0, T)$ for $h$ defined in form of (\ref{SMh}) as described above. Then $(M, g)$ and $(M, \tilde{g})$ are isometric.
\end{thm}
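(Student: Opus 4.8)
The plan is to mirror, step by step, the argument already carried out in the closed-manifold case (Proposition~\ref{Ttoinfty}, Proposition~\ref{LWhphi}, and the passage to Theorem~\ref{thm_singleL}), replacing the source-to-solution map $L_W$ by the Dirichlet-to-Neumann map $\Lambda_S$ and using the convolution representation of the solution to (\ref{stdampwith}) derived in the proof of Proposition~\ref{wellpose2}. Concretely, recall that $u_k(t)=\int_0^t K_k(t-\tau)\big(\int_{\partial M} f(\tau,x)\phi_k(x)\,dx\big)\,d\tau$ with $\mathcal{L}K_k=-\frac{s+1}{s^2+\lambda_k s+\lambda_k}$, so $\Lambda_S f$ has the same convolution-in-time structure as $L_W f$, with $\phi_k$ playing the role of $\varphi_k|_W$ and the kernel $K_k$ having poles at $\lambda_k^{\pm}$, which satisfy $\lambda_k^{\pm}<0$ for all large $k$. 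This is exactly the feature exploited in Proposition~\ref{Ttoinfty}.

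First I would establish the boundary analogue of Proposition~\ref{Ttoinfty}: for $f\in C^3_c((0,T);H^{3/2}(\partial M))$, knowledge of $\Lambda_S f$ on $(0,T)$ determines it on $(0,\infty)$. Pick $\epsilon>0$ with $\mathrm{supp}_t f\subset(0,T-\epsilon)$; for $t>T-\epsilon$ one has $\partial_\nu u|_{S_{\text{out}}}=\sum_k G_k(t)\,\partial_\nu\varphi_k|_{S_{\text{out}}}$ with $G_k(t)=\int_0^T K_k(t-\tau)\big(\int_{\partial M}f(\tau,x)\phi_k(x)\,dx\big)d\tau$. Since $K_k(z)$ is entire, each $G_k(z)$ is analytic, and for $\mathrm{Re}\,z>T-\epsilon$, $0<\tau<T-\epsilon$ and large $k$ we have $|e^{\lambda_k^{\pm}(z-\tau)}|\leq 1$; combining this with the $\lambda_k^{-2}$ decay bound from Proposition~\ref{f32est} (as in the proof of Proposition~\ref{wellpose2}) gives uniform convergence of $\sum_k G_k(z)\,\partial_\nu\varphi_k|_{S_{\text{out}}}$ in $H^{1/2}(S_{\text{out}})$ on $\{\mathrm{Re}\,z>T-\epsilon\}$. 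Analytic continuation then yields the claim.

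Next I would reproduce Proposition~\ref{LWhphi} verbatim with $\Lambda_S$ in place of $L_W$: choosing $\psi_k$ with $\mathrm{span}\{\psi_k\}$ dense in $H^{3/2}(S_{\mathrm{in}})$ and $h=\sum_k h_k(t)\psi_k$ of the form (\ref{SMh}) with $\mathrm{supp}\,h_k\subset(t_{k-1},t_k)$, $\lim t_k\leq T$, the convolution structure forces $\Lambda_S(\sum_{k\geq j+1}h_k\psi_k)=0$ on $(0,t_{j+1})$, so by induction $\Lambda_S h$ on $(0,T)$ determines each $\Lambda_S(h_k\psi_k)$ on $(0,t_k)$, hence on $(0,\infty)$ by the boundary analogue of Proposition~\ref{Ttoinfty}. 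Taking the Laplace transform of $\Lambda_S(h_k\psi_k)$ and dividing by $\mathcal{L}a$ (here one uses $\int_0^\infty e^{-st}h_k(t)\,dt>0$ on $(0,\infty)$, the analogue of the condition on $a$) recovers, for $\xi=\psi_k$, the quantity in (\ref{HSs}) — more precisely, after applying the operations $\mathcal{L}u_f\mapsto \frac{1}{s^2+2s}(\mathcal{L}u_f)'$ described just before Proposition~\ref{DNHS}, the function $H_S(s)$ with $\xi=\psi_k$. By density of $\mathrm{span}\{\psi_k\}$ in $H^{3/2}(S_{\mathrm{in}})$ this determines $H_S$ for all admissible $\xi$, so $\Lambda_S h$ on $(0,T)$ determines the full function $H_S(z)$. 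From here Proposition~\ref{PfTh2}, Proposition~\ref{phitoBSD} and Proposition~\ref{BSDtog} give $\mathrm{BSD}(g)$ up to orthogonal transformations and hence $g$ up to isometry, exactly as in the proof of Theorem~\ref{thm_Lambda}.

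I do not expect a genuine obstacle: every ingredient already exists in the paper. The one point needing a little care is the convergence bound in the boundary analogue of Proposition~\ref{Ttoinfty} — one must check that the extra factor $(s+1)$ in $\mathcal{L}K_k$ and the two time-derivatives picked up from integrating by parts (as in the proof of Proposition~\ref{wellpose2}) still leave a decay rate summable against the $\lambda_k^{-2}$ weight of Proposition~\ref{f32est}, uniformly for $\mathrm{Re}\,z>T-\epsilon$; since the kernel decays like $\lambda_k^{-1}$ there after the integration by parts and $h$ (resp.\ $h_k\psi_k$) is $C^\infty_c$ in time with values in $H^{3/2}$, this goes through just as before. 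The only other thing to state explicitly is that the map $u\mapsto\partial_\nu u|_{\partial M}$ is bounded $H^2(M)\to H^{1/2}(\partial M)$, which is already invoked before Proposition~\ref{DNHS}.
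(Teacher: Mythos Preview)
Your proposal is correct and follows precisely the route the paper indicates: the paper's own proof of Theorem~\ref{thm_singleLambda} consists of the single sentence that it ``can be proved in the same way'' as Theorem~\ref{thm_singleL}, and you have accurately unpacked what ``the same way'' means---the boundary analogue of Proposition~\ref{Ttoinfty} via analyticity and Proposition~\ref{f32est}, the inductive decoupling argument of Proposition~\ref{LWhphi} with $\Lambda_S$ in place of $L_W$, the density passage to recover $H_S$ for all $\xi$, and then Propositions~\ref{PfTh2}, \ref{phitoBSD}, \ref{BSDtog}. The technical caveat you flag (uniform convergence in the half-plane using the $\lambda_k^{-2}$ weight) is indeed the only point requiring attention, and your outline handles it correctly.
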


\bibliographystyle{plain}
{\small\bibliography{Reference10}}
\end{document}